\definecolor{webgreen}{rgb}{0,.5,0}
\definecolor{webbrown}{rgb}{.6,0,0}
\newcommand{\seqnum}[1]{\href{https://oeis.org/#1}{\underline{#1}}}
\newcommand{\rci}{\rm rci}
\newcommand{\exc}{{\rm exc}}
\newcommand{\inv}{{\rm inv}}
\newcommand{\crs}{{\rm cr}}
\newcommand{\nes}{{\rm nes}}
\newcommand{\st}{{\rm st}}
\newcommand{\ut}{{\rm ut}}
\newcommand{\Ut}{{\rm Ut}}
\newcommand{\lt}{{\rm lt}}
\newcommand{\Lt}{{\rm Lt}}
\begin{document}
	
\theoremstyle{plain}
\newtheorem{theorem}{Theorem}
\newtheorem{corollary}[theorem]{Corollary}
\newtheorem{lemma}[theorem]{Lemma}
\newtheorem{proposition}[theorem]{Proposition}

\theoremstyle{definition}
\newtheorem{definition}[theorem]{Definition}
\newtheorem{example}[theorem]{Example}
\newtheorem{conjecture}[theorem]{Conjecture}

\theoremstyle{remark}
\newtheorem{remark}[theorem]{Remark}

\begin{center}
	\vskip 1cm{\LARGE\bf  Crossings over permutations avoiding some pairs of patterns of length three\\
		\vskip 1cm}
	\large
	Paul M. Rakotomamonjy\footnote{Corresponding author.},  Sandrataniaina R. Andriantsoa \\ 
	Arthur Randrianarivony\\
	Department of Mathematics and Computer Science\\ 
	Domain of Sciences and Technology\\
	University of Antananarivo\\
	 Madagascar\\ 
	\href{rpaulmazoto@gmail.com}{\tt rpaulmazoto@gmail.com}, 
	\href{andrian.2sandra@gmail.com}{\tt andrian.2sandra@gmail.com}\\
	\href{arthur.randrianarivony@gmail.com}{\tt arthur.randrianarivony@gmail.com}
\end{center}

\vskip .2 in

\begin{abstract} 	
In this paper,  we compute the distributions of the statistic number of crossings over  permutations avoiding one of the pairs $\{321,231\}$, $\{123,132\}$ and $\{123,213\}$. The obtained results are new combinatorial interpretations of two known triangles in terms of restricted permutations statistic. For other pairs of patterns of length three, we find relationships between the polynomial distributions of the crossings over permutations that avoid the pairs containing the pattern 231 on the first hand and the pattern 312 on the other hand.
	\begin{center}
	\textbf{Keywords:} restricted permutation statistic, crossing, generating function, combinatorial interpretations.
	
	\textbf{2010 Mathematics Subject Classification}: 05A19, 05A15 and 05A10.
\end{center}

\end{abstract}
	
\section{Introduction and main results}\label{sec1} The statistic
number of crossings is among the complicated statistics on
permutations. Its survey arises from the works of de M\'edicis and
Viennot \cite{MedVienot}, Randrianarivony \cite{ARandr1,ARandr},
Corteel \cite{Cort},  Burrill et al.~\cite{Burril} to Corteel
et al.~\cite{Cort2}. Recently, the first author of this paper
introduced the study of this statistic on permutations avoiding a
single pattern of length three \cite{Rakot}. This one is devoted
on the distribution of crossings  on permutations avoiding a
pair of patterns of length three. The technique we use in this
paper differs from that of these known works who generally used
a bijection between permutations and a family of paths. Here,
we simply manipulate the structure of our combinatorial objects
and  use some trivial bijections that we will present in the
next sections.

A permutation $\sigma$ of $[n]:=\{1,2,\ldots,n\}$ is a bijection from $[n]$ to itself that can be written linearly as $\sigma=\sigma(1)\sigma(2)\cdots \sigma(n)$. We shall refer $n$ as the length of $\sigma$ (i.e.,  $n=|\sigma|$) and we let $S_n$ denote the set of all permutations of length $n$.  A \textit{crossing}  of a given permutation $\sigma$ is a pair of indices $(i,j)$ such that $i<j< \sigma(i)<\sigma(j)$  or  $\sigma(i)<\sigma(j)\leq i<j$. We let $\crs(\sigma)$ denote the number of crossings of $\sigma$.  For graphical understanding, we usually draw arc diagrams, i.e., draw an upper (resp., a lower) arc from $i$ to $\sigma(i)$ if $\sigma(i)>i$ (resp., $\sigma(i)<i$). 

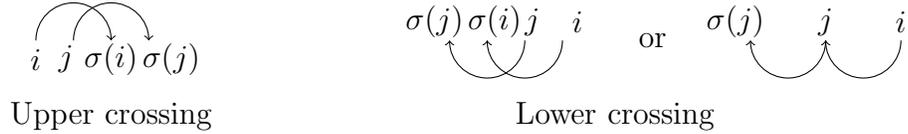
\begin{figure}[H]
	\centering
	\begin{tikzpicture}
	\draw[<-] (2.3,0) arc(0:180:0.5);
	\draw[<-] (2.8,0) arc(0:180:0.5);
	\draw (1.3,-0.25) node {$i$};
	\draw (2.3,-0.25) node {$\sigma(i)$};
	\draw (1.7,-0.25) node {$j$};
	\draw (3.1,-0.25) node {$\sigma(j)$};

	\draw (2.3,-1) node {Upper crossing};

	\draw[->] (7.8,0) arc(0:-180:0.5);
	\draw[->] (8.3,0) arc(0:-180:0.5);
	\draw (8.5,0.25) node {$i$};
	\draw (7.4,0.25) node {$\sigma(i)$};
	\draw (7.9,0.25) node {$j$};
	\draw (6.6,0.25) node {$\sigma(j)$};
	\draw (9.5 ,0) node {or};

	\draw[->] (11.8,0) arc(0:-180:0.5);
	\draw[->] (12.8,0) arc(0:-180:0.5);
	\draw (12.8,0.25) node {$i$};
	\draw (11.8,0.25) node {$j$};
	\draw (10.6,0.25) node {$\sigma(j)$};
	\draw (9,-1) node {Lower crossing};
	\end{tikzpicture}
	\caption{Arc diagrams  of crossings.}
	\label{fig:cross}
\end{figure}

Example: the crossings of the permutation $\pi=4735126\in S_7$ drawn in Figure \ref{fig:arcdiag} are $(1,2)$, $(5,6)$ and $(6,7)$. So we have $\crs(\pi)=3$.

\begin{figure}[H]
	\centering
	\begin{tikzpicture}
	\draw[black] (0,1) node {$1~2~3~4~5~6~7$}; 
	\draw (-0.9,1.2) parabola[parabola height=0.2cm,red] (-0,1.2);\draw[->,black] (-0.09,1.25)--(-0,1.2);			
	\draw (-0.6,1.2) parabola[parabola height=0.3cm,red] (0.85,1.2); \draw[->,black] (0.8,1.25)--(0.85,1.2);			
	\draw (-0.45,0.7) -- (-0.35,0.85)[rounded corners=0.1cm] -- (-0.25,0.7) -- cycle;			
	\draw (-0.03,1.2) parabola[parabola height=0.2cm,red] (0.3,1.2); \draw[->,black] (0.27,1.25)--(0.3,1.2);			
	\draw (-0.9,0.8) parabola[parabola height=-0.3cm,red] (0.3,0.8); \draw[->,black] (-0.85,0.75)--(-0.9,0.8);			
	\draw (-0.6,0.8) parabola[parabola height=-0.3cm,red] (0.6,0.8); \draw[->,black] (-0.55,0.75)--(-0.6,0.8);			
	\draw (0.6,0.8) parabola[parabola height=-0.1cm,red] (0.9,0.8); \draw[->,black] (0.65,0.75)--(0.6,0.8);			
	\end{tikzpicture}
	\caption{Arc diagrams of $\pi=4735126 \in S_7$ with $\crs(\pi)=3$.}
	\label{fig:arcdiag}
\end{figure}
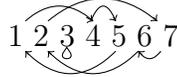

Let $\sigma\in S_n$ and $\tau \in S_k$ with $1\leq k\leq n$. For a given sequence of integers 
$$i_1<i_2<\cdots <i_k,$$ we say that a subsequence $s=\sigma(i_1)\sigma(i_2)\cdots \sigma(i_k)$ of $\sigma$ is an occurrence of $\tau$ if $s$ and $\tau$ are in order isomorphic, i.e.,
$\sigma(i_x)<\sigma(i_y)$ if and only if $\tau(x)<\tau(y)$. If there is no occurrence of the pattern $\tau$ in $\sigma$, we say that $\sigma$ is $\tau$-\textit{avoiding}. Example: the permutation $\pi=4162375 \in S_7$ is $321$-avoiding and it has five occurrences  of the pattern 312 namely 312, 423, 623, 625  and 635. We let $S_n(\tau)$ denote the set of all $\tau$-avoiding permutations of $[n]$. For a subset of patterns $T=\{\tau_1,\tau_2,\ldots\}$, we usually write $S_n(\tau_1,\tau_2,\ldots)$ for $S_n(T)$ and $S(T):=\cup_{n\geq 0}S_n(T)$. 
There are three useful trivial involutions on $S_n$ namely \textit{reverse} r, \textit{complement} c and \textit{inverse} i defined as follows:  	for a permutation $\sigma\in S_n$,
\begin{itemize}
	\setlength\itemsep{-0.3em}
	\item the reverse of $\sigma$ is r$(\sigma)=\sigma(n)\sigma(n-1)\cdots \sigma(1)$,
	\item the complement of $\sigma$ is c$(\sigma)=(n+1-\sigma(1))(n+1-\sigma(2))\cdots (n+1-\sigma(n))$,
	\item the inverse of $\sigma$ is i$(\sigma)=p(1) p(2) \cdots p(n)$ where $p(i)$ is the position of $i$ in $\sigma$. We often write i$(\sigma)=\sigma^{-1}$.
\end{itemize}	
Example: for $\pi=4135762 \in S_7$, we have r$(\pi)=2675314$, c$(\pi)=4753126$,  $\pi^{-1}=2731465$, ${\rm r\circ c}(\pi)=6213574$ and ${\rm r\circ c \circ i}(\pi)=3247516$ where $\circ$ denotes the composition operation.  Let
${\rm fg:=f\circ g}$ for an involution f and g in $\{{\rm r,c,i}\}$. By composition $\circ$, these defined involutions generate the dihedral group $\mathcal{D}=\{{\rm id,r,c,i,rc,ri,ci,rci}\}$ and they greatly simplify enumeration of pattern-avoiding permutations statistics through the fundamental property by Simion and Smith \cite{SiS} 
\begin{equation}
\varphi(S_n(T))= S_n(\varphi(T)) \text{ for $\varphi \in \mathcal{D}$ and a subset of patterns $T$}. \label{eqfunda}
\end{equation}

For a given statistic st, we say that two subsets $T_1$ and $T_2$ are st-\textit{Wilf-equivalent} if and only if the polynomial distributions of $\st$  over the sets $S_n(T_1)$ and $S_n(T_2)$ are the same for all integers $n$. In other word, for every integer $n$, we have
\begin{equation*}
\sum_{\sigma \in S_n(T_1)} x^{{\rm st}(\sigma)}=\sum_{\sigma\in S_n(T_2)} x^{{\rm st}(\sigma)}.
\end{equation*} 
Various statistic-Wilf-equivalence classes for subset of patterns of length three are known in \cite{BKLPRW, Dokos, Eliz1, Rakot, ARob}.  In particular, Rakotomamonjy \cite{Rakot} provided the Wilf-equivalence classes modulo $\crs$ for single pattern of length three. He proved bijectively that the only non singleton class is $\{132,213,321\}$, i.e., we have
\begin{equation}\label{eq:res1}
\sum_{\sigma\in S_n(321)} q^{\crs(\sigma)}=\sum_{\sigma\in S_n(132)} q^{\crs(\sigma)}=\sum_{\sigma\in S_n(213)} q^{\crs(\sigma)}.
\end{equation}
To prove the first identity of \eqref{eq:res1}, he exploited the bijection $\Theta:S_n(321)\rightarrow S_n(132)$ exhibited by Elizalde and Pak \cite{ElizP} and proved that $\Theta$ is \crs-preserving \cite[Thm. 3.10]{Rakot}. The second  identity of \eqref{eq:res1} is simply obtained from the fact that the  reverse-complement-inverse rci preserves the number of crossings \cite[Lem. 4.2]{Rakot}). Using the $q,p$-Catalan numbers defined by Randrianarivony \cite{ARandr}, Rakotomamonjy also proved the following result:
\begin{theorem}{\rm \cite{Rakot}}\label{thmRakot}
	Let $\tau\in \{321,132,213\}$. The polynomial $\displaystyle F_n(\tau;q):=\sum_{\sigma\in S_n(\tau)} q^{\crs(\sigma)}$  satisfies
	\begin{equation*}\label{eq:Randr}
	F_n(\tau;q)=F_{n-1}(\tau;q)+\sum_{k=0}^{n-2}q^kF_k(\tau;q)F_{n-1-k}(\tau;q).
	\end{equation*}
	Moreover, we have 
	\begin{equation*}
	\sum_{\sigma\in S(\tau)} q^{\crs(\sigma)}z^{|\sigma|}=\frac{1}{1-\displaystyle\frac{z}{				
			1-\displaystyle\frac{z}{
				1-\displaystyle\frac{qz}{								
					1-\displaystyle\frac{qz}{
						1-\displaystyle\frac{q^2z}{
							1-\displaystyle\frac{q^2z}{				
								\ddots}
						}
				}}		
	}}}. \label{eq:fc}
	\end{equation*}
\end{theorem}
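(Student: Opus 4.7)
By \eqref{eq:res1}, the polynomials $F_n(\tau;q)$ coincide across $\tau\in\{321,132,213\}$, so it is enough to establish the recurrence and continued fraction for a single pattern. The plan is to work with $\tau=132$, whose avoiders admit a clean recursive structure via the position of the maximum value.

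The recurrence will follow from a decomposition of $\sigma\in S_n(132)$ around the position $p$ of the value $n$. Writing $\sigma=L\,n\,R$, the $132$-avoidance condition forces every value of $L$ (length $p-1$) to exceed every value of $R$ (length $n-p$); after normalization one obtains $L'\in S_{p-1}(132)$ and $R'\in S_{n-p}(132)$. The case $p=n$ contributes $F_{n-1}(q)$ immediately, since the terminal fixed point creates no arc and leaves every crossing unchanged. For $1\leq p\leq n-1$ the contribution should be $q^{p-1}F_{p-1}(q)F_{n-p}(q)$; setting $k=p-1$ and summing over $p$ then yields the recurrence.

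The main obstacle is justifying the weight $q^{p-1}$ at fixed $p<n$. A direct arc-by-arc analysis gives the clean splitting
\begin{equation*}
\crs(\sigma)=\crs_\sigma(L)+\crs_\sigma(R)+c_p,
\end{equation*}
where $c_p$ counts the upper crossings formed by the central arc $p\to n$ with arcs originating in $L$; no crossing has one endpoint in each block, because the strict separation of value ranges and of position ranges rules out both the upper and the lower crossing conditions. The subtle point is that the block-internal counts $\crs_\sigma(L)$ and $\crs_\sigma(R)$ do not equal $\crs(L')$ and $\crs(R')$ permutation-by-permutation: the downward shift of $L$-values and the upward shift of $R$-positions change the upper/lower crossing thresholds, and small-example checks show that the identity $\crs(\sigma)=\crs(L')+\crs(R')+(p-1)$ fails on individual $\sigma$. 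What must therefore be proved is that after summing over all admissible $(L,R)$ at a fixed $p$, these shift discrepancies together with $c_p$ aggregate precisely to an overall factor $q^{p-1}$ on top of $F_{p-1}(q)F_{n-p}(q)$. This fiber-wise book-keeping, likely handled inductively by exploiting the recurrence at smaller sizes, is the technical heart of the argument.

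Once the recurrence is established, the continued fraction follows by a standard level-by-level unfolding in the spirit of Randrianarivony's $q$-Catalan numbers. One writes the bivariate series $\sum_{\sigma\in S(132)}q^{\crs(\sigma)}z^{|\sigma|}$ as $1/(1-zH_0(z,q))$ for a sub-series $H_0$, and iterates the recurrence to peel off successive levels; at each iteration a pair of matching weights $(q^h,q^h)$ is produced before descending to the next level, and unfolding this indefinitely yields the alternating pattern $1,1,q,q,q^2,q^2,\ldots$ displayed in the statement.
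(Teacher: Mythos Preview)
This theorem is only quoted in the present paper (it is attributed to \cite{Rakot}, where it is proved via Randrianarivony's $q,p$-Catalan machinery and a path bijection), so there is no in-paper proof to compare against. That said, your proposal contains a genuine error, not merely a gap.

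Your plan hinges on the claim that, for $\sigma\in S_n(132)$ with $n$ at position $p<n$, the aggregate contribution over all admissible $(L,R)$ equals $q^{p-1}F_{p-1}(q)F_{n-p}(q)$. You correctly note that this fails permutation-by-permutation, but it also fails at the aggregate level. Take $n=4$, $p=3$: the two permutations with $4$ in position $3$ are $2341$ and $3241$, both with $\crs=0$, so the fibre sum is $2$. None of the three terms $q^kF_k(q)F_{3-k}(q)$ for $k=0,1,2$ (namely $4+q$, $2q$, $2q^2$) equals $2$. More generally, the position-of-$n$ decomposition of $S_n(132)$ does \emph{not} refine the recurrence term by term; only the grand total over all $p<n$ matches $\sum_{k=0}^{n-2}q^kF_kF_{n-1-k}$. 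So the ``fibre-wise book-keeping'' you defer is not a technicality to be filled in---the intended identity at fixed $p$ is simply false, and the argument cannot be repaired along these lines.

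The approach that actually works (and that \cite{Rakot} uses) passes through $321$-avoiders and their bijection with lattice paths, where the crossing statistic becomes a path statistic whose Catalan-type recursion is exactly the one in the statement; the continued fraction then drops out of Randrianarivony's $q,p$-Catalan framework. If you want a permutation-level proof for $132$, you would need a different decomposition (for instance by the position of $1$, or by first-return structure after transporting through the Elizalde--Pak bijection $\Theta$), not the position of the maximum.
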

Burstein and Elizalde found that this continued fraction expansion is the distribution of the statistic number of occurrences of the generalized pattern 31-2 in 231-avoiding permutations \cite[Thm.~3.11]{BEliz}. For interested reader, knowing that Corteel \cite{Cort} established the connection between occurrences of patterns, crossings and nestings on permutations, finding any correspondence between these results may be interesting. Notice also that finding the polynomial distributions of the number of crossings over the sets $S_n(\tau)$, for 
$\tau \in \{123,231,312\}$, remain open. The first result of this paper is the following.	
\begin{theorem} \label{thm1} 		
	We have the following identities:
	\begin{align}
	\sum_{\sigma \in S(231,321)}q^{\crs(\sigma)}z^{|\sigma|}&=\frac{1-qz}{1-(1+q)z-(1-q)z^2}, \label{main31}\\	
	\sum_{\sigma \in  S(123,\tau)}q^{\crs(\sigma)}z^{|\sigma|}&=1+\frac{(1-qz)z}{(1-z)(1-(1+q)z)} \text{ for } \tau \in \{132,213\}. \label{main32}
	\end{align}
\end{theorem}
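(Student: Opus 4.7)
The plan is to establish each identity by a structural decomposition of the avoidance class and a short generating-function manipulation. For $S(231,321)$, I would first show that every $\sigma\in S_n(231,321)$ with $n\ge 2$ has $\sigma(1)=1$ or $\sigma(2)=1$: if $\sigma(k)=1$ with $k\ge 3$, the triple at positions $1,k-1,k$ is a $321$ pattern when $\sigma(1)>\sigma(k-1)$ and a $231$ pattern when $\sigma(1)<\sigma(k-1)$. Writing $A(z,q)$ and $B(z,q)$ for the sums of $q^{\crs(\sigma)}z^{|\sigma|}$ over those with $\sigma(1)=1$ and $\sigma(2)=1$ respectively, and setting $F=1+A+B$, the first recursion is $A=zF$ since prepending $1$ with all other entries shifted up by $1$ preserves crossings. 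For $B$, I would swap the first two entries of $\sigma$ to obtain $\sigma''$; a triple check shows $\sigma''\in S_n(231,321)$, and writing $\sigma''=1\cdot(\hat\sigma+1)$ with $\hat\sigma\in S_{n-1}(231,321)$, a direct arc count gives $\crs(\sigma)-\crs(\sigma'')=[\hat\sigma(2)=1]$: the only new crossing is the lower arc $2\to 1$ meeting the arc reaching value $2$, which exists iff $\sigma(3)=2$, and the structure of $(231,321)$-avoiders forces this exactly when $\sigma(1)\ge 3$, i.e.\ $\hat\sigma(2)=1$. Splitting according to $\hat\sigma(1)=1$ versus $\hat\sigma(2)=1$ then gives $B=z(A+qB)$; solving yields~\eqref{main31}.

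For $S(123,132)$, the dual pigeonhole shows that $1$ sits at position $n$ or $n-1$ when $n\ge 2$: if $\sigma(k)=1$ with $k\le n-2$, any two positions $j_1<j_2$ beyond $k$ yield the triple $(1,\sigma(j_1),\sigma(j_2))$, a $123$ or a $132$ pattern according to the sign of $\sigma(j_1)-\sigma(j_2)$. Let $G_n(q)=\sum_{S_n(123,132)}q^{\crs}$, $P_n=\sum_{\sigma(n)=1}q^{\crs(\sigma)}$ and $Q_n=\sum_{\sigma(n-1)=1}q^{\crs(\sigma)}$, so $G_n=P_n+Q_n$ for $n\ge 2$. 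Since the arc $n\to 1$ contributes no crossing, $P_n$ equals $\sum_\tau q^{\crs(\tau)}$ with $\tau$ ranging over $(123,132)$-avoiding permutations of $\{2,\dots,n\}$ on positions $[1,n-1]$. The key step is $P_n=G_{n-1}$, which I would prove by combining two identities. The first is the value-shift formula $\crs(\tau)=\crs(\tau')+E(\tau')-M(\tau')$ with $\tau'=\tau-1\in S_{n-1}(123,132)$, where $E(\tau')$ and $M(\tau')$ count pairs $(i,j)$ with $i<j$, $\tau'(i)=j$ and $\tau'(j)>j$ (respectively $\tau'(j)=i$ and $\tau'(i)<i$). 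The second is the general inverse identity $\crs(\sigma^{-1})=\crs(\sigma)+E(\sigma)-M(\sigma)$, proved by matching upper crossings of $\sigma^{-1}$ with lower crossings of $\sigma$ (and vice versa), up to the boundary cases $\sigma(i_2)=i_1$ and $\sigma(i_1)=i_2$. Since inversion fixes each of $123$ and $132$, it restricts to an involution on $S_{n-1}(123,132)$, and combining the two identities produces the $\crs$-preserving bijection $\tau\mapsto(\tau')^{-1}$, giving $P_n=G_{n-1}$.

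For $Q_n$, deleting position $n-1$ gives a permutation $\tau$ on $[1,n-1]$ with values $\{2,\dots,n\}$; the only crossing involving position $n-1$ is the lower crossing $(n-1,n)$, present precisely when $\sigma(n)\ne n$, so $\crs(\sigma)=\crs(\tau)+[\sigma(n)\ne n]$. The case $\sigma(n)=n$ corresponds to $\tau'(n-1)=n-1$, which together with $(123,132)$-avoidance forces $\tau'|_{[1,n-2]}$ to be strictly decreasing, leaving the unique permutation $(n-2)(n-3)\cdots 1(n-1)$ with $\crs(\tau)=0$. Hence $Q_n=1+q(G_{n-1}-1)=1-q+qG_{n-1}$ and $G_n=(1+q)G_{n-1}+(1-q)$ for $n\ge 2$ with $G_1=1$; solving the recursion gives~\eqref{main32} for $\tau=132$. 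The identity for $\tau=213$ then follows since $\rci$ preserves crossings \cite[Lem.~4.2]{Rakot} and sends $\{123,132\}$ to $\{123,213\}$ by~\eqref{eqfunda}. The main obstacle I foresee is the clean verification of the two boundary-correction identities; both reduce to a careful case analysis on pairs of arcs at positions $i<j$ according to whether $\sigma(j)$ equals $i$ or whether $\sigma(i)$ equals $j$.
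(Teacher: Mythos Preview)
Your proposal is correct and follows essentially the same route as the paper: both arguments decompose $S_n(231,321)$ according to whether $1$ sits in position $1$ or $2$, and $S_n(123,132)$ according to whether $1$ sits in position $n$ or $n-1$, and both reduce the case $\tau=213$ to $\tau=132$ via the $\crs$-preserving involution $\rci$. Your ``value-shift formula'' and ``inverse identity'' are exactly the $k=n$ case of the paper's Lemma~\ref{lem21} together with Lemma~\ref{lem22}, so your bijection $\tau\mapsto(\tau')^{-1}$ is the inverse of the paper's map $\Phi_{n,n}$; likewise your swap argument for the pair $\{231,321\}$ is equivalent to the paper's insertion $\hat\sigma\mapsto\hat\sigma^{(2,1)}$ (your condition $[\hat\sigma(2)=1]$ coincides with the paper's $1-\delta_{1,\hat\sigma(1)}$ on this avoidance class), and your system $A=zF$, $B=z(A+qB)$ unwinds to the same recurrence $F_n=(1+q)F_{n-1}+(1-q)F_{n-2}$ that the paper obtains from Proposition~\ref{prop21}.
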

We observe throughout the paper of Bukata et al.~\cite{BKLPRW} that  identities \eqref{main31} and \eqref{main32} are, respectively, new combinatorial interpretations of the triangles \seqnum{A076791} and \seqnum{A299927} of the On-Line Encyclopedia of Integer Sequences (OEIS) \cite{OEIS}. Bukata et al.~interpreted these triangles in terms of number of double descents (ddes) and number of double ascents(dasc) over permutations avoiding some pairs of patterns of length three \cite[Prop.~7 and Prop.~11]{BKLPRW}. The statistics ddes and dasc are,
respectively, defined by ${\rm ddes}(\sigma):=|\{i|\sigma(i)>\sigma(i+1)>\sigma(i+2)\}|$  and ${\rm dasc}(\sigma):=|\{i|\sigma(i)<\sigma(i+1)<\sigma(i+2)\}|$ for a permutation  $\sigma$. Notice that the triangle \seqnum{A299927} is new in the OEIS and it  was first discovered by Bukata et al.~.

Let $\tau \in \{132,213\}$. For an integer $n\geq 1$  and  $k\geq 0$, as direct consequence of identity \eqref{main32}, we have  
\begin{equation*}
|\{\sigma \in S_n(123,\tau)|\crs(\sigma)=k\}|=\delta_{k,0}+\binom{n-1}{k+1}.
\end{equation*}

The next result of this paper concerns various relationships between the distributions of the number of crossing over permutations that avoid the pattern 231 on the first hand and permutations that avoid the pattern 312 on the second hand.  For that, we let $F(T;q,z):=\sum_{\sigma \in S(T)}q^{\crs(\sigma)}z^{|\sigma|}$ for a subset of patterns $T$.
\begin{theorem} \label{thm2} We have the following identities:
	\begin{align*}
	F(312;q,z)&=\frac{1}{1-zF(231;q,z)}, \\
	F(312,123; q,z)&= 1+\left(\frac{z}{1-z} \right)^2 +zF(231,123; q,z), \\	
	\text{ and }	F(312,\tau; q,z)&= 1+\left( \frac{z}{1-z}\right)F(231,\tau'; q,z)  \text{ for } (\tau,\tau') \in   \{132,213\}^2. 
	\end{align*}
\end{theorem}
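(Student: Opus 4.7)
The plan is to prove all three identities with one uniform approach: decompose each $312$-avoiding permutation by the position of $1$, and push the resulting first block through a crossing-preserving correspondence with $231$-avoiding permutations of one smaller size. The cornerstone is an auxiliary lemma: for $\pi \in S_m$, consider the bijection $f \colon [m] \to \{2,\ldots,m+1\}$ given by $f(i) := \pi(i)+1$, and extend the definition of $\crs$ to $f$ by using the same upper/lower crossing inequalities on the embedded pairs $(i,f(i))$. I would prove $\crs(f)=\crs(\pi^{-1})$ by pairing each arc $(i,\pi(i)+1)$ of $f$ with the arc $(\pi(i),i)$ of $\pi^{-1}$; a direct check shows that upper crossings of $f$ correspond bijectively to lower crossings of $\pi^{-1}$ and conversely, and the shift by one is exactly what transports the weak inequality $f(j)\le i$ in the lower-crossing definition to the strict inequality $\pi^{-1}(b)<a$ in the upper-crossing definition of $\pi^{-1}$.

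For any nonempty $\sigma \in S(312)$ with $\sigma(k)=1$, a standard argument using $312$-avoidance gives $\{\sigma(1),\ldots,\sigma(k-1)\}=\{2,\ldots,k\}$ and $\{\sigma(k+1),\ldots,\sigma(n)\}=\{k+1,\ldots,n\}$, and a short case analysis shows that no crossing spans the boundary or involves the arc $(k,1)$. Deleting the trailing $1$ and shifting values down identifies first blocks of length $k$ bijectively with $S_{k-1}(312)$, and the Key Lemma shows their $\crs$ matches the ordinary $\crs$ of the inverse, which ranges over $S_{k-1}(231)$. The rest, after shifting positions and values by the same amount, is a $312$-avoiding permutation of length $n-k$ with crossings preserved. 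This yields $F(312;q,z)=1+zF(231;q,z)F(312;q,z)$, proving the first identity. For the pair identities I use the same decomposition and note that each $\tau \in \{123,132,213\}$ is self-inverse, so the single-first-block contribution becomes $zF(231,\tau;q,z)$; what changes is the analysis of which copies of $\tau$ can straddle the boundary between the first block and the rest. A case analysis shows: for $\tau=123$, a spanning $123$ forces both the left part of the first block and the rest to be strictly decreasing, so $\sigma$ must be of the form $k(k-1)\cdots 1\,n(n-1)\cdots(k+1)$ for $k \in \{1,\ldots,n-1\}$, all with zero crossings, summing to $(z/(1-z))^2$; for $\tau=132$, a spanning $132$ requires a descent in the rest, forcing the rest to be the identity sequence whenever both halves are non-empty; for $\tau=213$, a spanning $213$ forces the first block to have length at most one. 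Each case translates directly into the claimed generating-function identity.

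The main obstacle is the careful bookkeeping in the Key Lemma: the strict inequality $i<j<\sigma(i)<\sigma(j)$ in the upper-crossing definition and the weak inequality $\sigma(j)\le i$ in the lower-crossing definition must be shown to transport correctly under the arc reversal that comes with inversion, and the shift $f(i)=\pi(i)+1$ is precisely what realigns these boundary cases. A secondary technical point is that, in order for the identity to hold uniformly for every pair $(\tau,\tau')\in\{132,213\}^2$, the equality $F(231,132;q,z)=F(231,213;q,z)$ must also be established; I would do this by performing an analogous first-block decomposition on the $231$-side (using the position of $n$ instead of $1$) and showing that both generating functions satisfy the same recursion.
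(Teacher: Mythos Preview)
Your proposal is correct and follows essentially the same route as the paper. Your ``Key Lemma'' is exactly the paper's statement that the bijection $\Phi_{n,n}\colon S_{n-1}\to S_n^n$, $\sigma\mapsto(\sigma^{-1})^{(n,1)}$, is $\crs$-preserving (Proposition~\ref{prop22}); the paper derives this as a special case of a general formula for $\crs(\sigma^{(k,1)})$ (Lemma~\ref{lem21}) combined with $\crs(\sigma^{-1})=\crs(\sigma)+\ut(\sigma)-\lt(\sigma)$, whereas you argue it directly by matching arcs. Your block decomposition by the position of $1$ and the resulting recursions are identical to Propositions~\ref{prop33}, \ref{prop32}, and \ref{prop34}. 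The only stylistic differences are: (i) for $\tau=132$ the paper reduces to $\tau=213$ via the $\crs$-preserving involution $\rci$ (using $\rci(\{312,132\})=\{312,213\}$ and $\rci(\{231,132\})=\{231,213\}$) rather than doing a separate ``spanning $132$'' analysis, and (ii) the paper obtains $F(231,132;q,z)=F(231,213;q,z)$ from the same $\rci$ symmetry instead of a parallel decomposition. These shortcuts save you the extra case analyses, but your direct arguments are equally valid.
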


The aim of this paper is to find the polynomial distributions of the number of crossings over permutations avoiding a pair of patterns in $S_3$. The tool that we use is not sufficient to treat all cases. However, these relationships we found will obviously reduce the number of the remain cases to be processed.

We organize the rest of this paper in three sections. Section \ref{sec2} is for notation and preliminary in which we will prove one fundamental proposition that will play a central role in the proof of our results. In Section \ref{sec3}, we will provide the proof of our main results. In Section \ref{sec4}, we will end this paper with two additional results. The first one is about the distributions of the number of crossings over the sets $S_n(321,213)$ and $S_n(321,132)$. The second one is about a $\crs$-preserving bijection between $S_n^{k}$ and $S_n^{n+1-k}$.

\section{Notation and preliminary}\label{sec2}	

Let $n$ be a positive  integer. For $k \in [n]$, we write
\begin{align*}
S_{n}^{k} &:=\{\sigma \in S_n|\sigma(k)=1\} \\
S_{n,k} & :=\{\sigma \in S_n|\sigma(n)=k\}.
\end{align*}
We let $F_n(T;q)$, $F_n^k(T; q)$ and $F_{n,k}(T;q)$
denote, respectively, the polynomial distributions of $\crs$ over
the sets $S_n(T)$, $S_n^k(T)$ and $S_{n,k}(T)$, for any subset of
patterns $T$ and any integer $k\in [n]$.  In particular, we let
$F_n(q):=F_n(\emptyset;q)$, $F_n^k(q):=F_n^k(\emptyset; q)$ and
$F_{n,k}(q)=F_{n,k}(\emptyset;q)$.

Let $m$ and $n$ be two integers such that $m>1$. Let $T\subset S_m$ and $k\in [n]$. We also write $T^{-1}:=\{\tau^{-1}|\tau\in T \}$ and $T(i):=\{\tau(i)|\tau \in T\}$ for $i\in [m]$. In this section, we will prove the following fundamental proposition that will help us to solve our problems in the next sections.
\begin{proposition}\label{prop21}
	For all integer $n\geq 1$, the following properties hold:
	\begin{align}
	\text{If }& \min T^{-1}(1)>1, \text{ we have } F_{n}^{1}(T;q)=F_{n-1}(T;q).\label{stat1}\\
	\text{If }&  \min T^{-1}(1)>2, \text{ we have } F_{n}^{2}(T;q)= qF_{n-1}(T;q)+(1-q)F_{n-2}(T;q). \label{stat2} \\
	\text{If }& \max T^{-1}(1)<m-1, \text{ we have } F_{n}^{n-1}(T;q)=qF_{n-1}(T^{-1};q)+(1-q)F_{n-1,n-1}(T^{-1};q). \label{stat3} \\	
	\text{If } & \max T^{-1}(1)<m, \text{ we have }  F_{n}^{n}(T;q)=F_{n-1}(T^{-1};q). \label{stat4} 
	\end{align}
\end{proposition}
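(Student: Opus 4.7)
The plan is to establish each of the four identities by exhibiting a crossing-preserving bijection between $S_n^k(T)$ and an appropriate target set, exploiting the fact that the extreme value $1$ (or, after inversion, the value $n$) severely restricts its involvement in both crossings and pattern occurrences. Identities \eqref{stat1}--\eqref{stat2} are handled by a ``delete-and-shift'' bijection on $\sigma$ itself; \eqref{stat4} uses ``delete-and-shift then invert''; and \eqref{stat3} reduces to \eqref{stat4} via a single transposition.

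For \eqref{stat1}, if $\sigma\in S_n^1(T)$ so $\sigma(1)=1$, then position $1$ cannot lie in any crossing (both crossing types force an impossibility at the smallest value), and the map $\sigma'(i)=\sigma(i+1)-1$ for $i\in[n-1]$ is a bijection $S_n^1\to S_{n-1}$ preserving $\crs$. The hypothesis $\min T^{-1}(1)>1$ says $\tau(1)\neq 1$ for every $\tau\in T$, so no occurrence of $\tau$ can start at the position carrying the minimum; hence $\sigma$ avoids $T$ iff $\sigma'$ does. For \eqref{stat2}, if $\sigma(2)=1$, I would first verify that position $2$ participates in at most one crossing---the lower crossing $(2,j)$ where $\sigma(j)=2$---and that this crossing is present exactly when $\sigma(1)\neq 2$. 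Deleting position $2$ and subtracting $1$ from each remaining entry yields $\sigma'\in S_{n-1}$; under $\min T^{-1}(1)>2$ no $\tau\in T$ has $\tau^{-1}(1)\in\{1,2\}$, which forbids any occurrence from involving position $2$. Since $\sigma(1)=2$ iff $\sigma'(1)=1$, splitting by this condition yields
\[
F_n^2(T;q)=F_{n-1}^1(T;q)+q\bigl(F_{n-1}(T;q)-F_{n-1}^1(T;q)\bigr),
\]
and substituting \eqref{stat1} rewrites this as the right-hand side of \eqref{stat2}.

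For \eqref{stat4}, the main bijection is $\sigma\mapsto\pi$ with $\pi(j)=\sigma^{-1}(j+1)$ for $j\in[n-1]$; since $\sigma(n)=1$ gives $\sigma^{-1}(1)=n$, the values $\pi(1),\ldots,\pi(n-1)$ are exactly $[n-1]$. Pattern avoidance follows from the Simion--Smith property combined with $\max T^{-1}(1)<m$: the latter forbids any $\tau\in T^{-1}$ from having the value $m$ at its first position, so no occurrence of $T^{-1}$ in $\sigma^{-1}$ can use position~$1$, hence $\sigma$ avoids $T$ iff $\pi$ avoids $T^{-1}$. The main obstacle of the proposition is verifying $\crs(\sigma)=\crs(\pi)$: the inverse map alone does \emph{not} preserve $\crs$ because of the asymmetric strict/non-strict inequalities in the definitions of upper and lower crossings. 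Using the relation $\sigma(\pi(k))=k+1$ for $k\in[n-1]$, I would show by direct calculation that the map $(i,j)\mapsto(\pi^{-1}(i),\pi^{-1}(j))$ sends upper crossings of $\sigma$ bijectively to lower crossings of $\pi$ and lower crossings of $\sigma$ bijectively to upper crossings of $\pi$---the asymmetry $<$ versus $\leq$ is precisely what makes the boundary inequalities line up. Since position $n$ of $\sigma$ carries the value $1$ and so participates in no crossing, every crossing is accounted for.

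For \eqref{stat3}, let $\bar\sigma=\sigma\circ(n-1,n)$ be the permutation obtained from $\sigma$ by swapping positions $n-1$ and $n$; then $\bar\sigma(n)=1$, so $\bar\sigma\in S_n^n$. The stronger hypothesis $\max T^{-1}(1)<m-1$ excludes both $\tau^{-1}(1)=m-1$ and $\tau^{-1}(1)=m$, which after a short case check implies that any occurrence of $\tau\in T$ meeting $\{n-1,n\}$ uses exactly one of the two positions; such occurrences correspond bijectively under the swap, so $\sigma$ avoids $T$ iff $\bar\sigma$ does. Tracking crossings, the transposition can only affect pairs involving $n-1$ or $n$, and a direct examination shows that the unique crossing differing between $\sigma$ and $\bar\sigma$ is the lower crossing $(n-1,n)$ of $\sigma$, which is present exactly when $\sigma(n)\neq n$; hence $\crs(\sigma)=\crs(\bar\sigma)+[\sigma(n)\neq n]$. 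Applying \eqref{stat4} to $\bar\sigma\in S_n^n(T)$ produces $\pi\in S_{n-1}(T^{-1})$ with $\crs(\bar\sigma)=\crs(\pi)$, and the formula $\pi(n-1)=\bar\sigma^{-1}(n)$ shows that $\sigma(n)=n$ iff $\pi(n-1)=n-1$. Summing over the two cases gives
\[
F_n^{n-1}(T;q)=F_{n-1,n-1}(T^{-1};q)+q\bigl(F_{n-1}(T^{-1};q)-F_{n-1,n-1}(T^{-1};q)\bigr),
\]
which rearranges to \eqref{stat3}.
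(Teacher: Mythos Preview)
Your plan is correct and all four cases check out: in particular the delicate claim in \eqref{stat4} that the map $(i,j)\mapsto(\pi^{-1}(i),\pi^{-1}(j))$ swaps upper and lower crossings holds exactly because the shift $\pi^{-1}(a)=\sigma(a)-1$ turns the strict inequality $j<\sigma(i)$ into the non-strict $\pi(l)\le k$ and vice versa; and in \eqref{stat3} the hypothesis $\max T^{-1}(1)<m-1$ indeed forces any $\tau$-occurrence in $\sigma$ touching $\{n-1,n\}$ to use only position $n$ (and only $n-1$ in $\bar\sigma$), so the swap bijects occurrences.

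The paper's route is organized differently. It first proves a general formula (Lemma~\ref{lem21})
\[
\crs(\sigma^{(k,1)})=\crs(\sigma)+\ut_k^-(\sigma)-\lt_k^-(\sigma)+\alpha_k(\sigma)
\]
for inserting $1$ at \emph{any} position, and a companion lemma (Lemma~\ref{lem22}) giving $\crs(\sigma^{-1})=\crs(\sigma)+\ut(\sigma)-\lt(\sigma)$. Identities \eqref{stat1}--\eqref{stat2} then fall out by specializing $k=1,2$, while \eqref{stat3}--\eqref{stat4} use the single bijection $\Phi_{n,k}:\sigma\mapsto(\sigma^{-1})^{(k,1)}$ with $k=n,n-1$ and combine the two lemmas. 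Your argument is essentially the same bijection read in the opposite direction, but you bypass the transient bookkeeping by verifying the crossing correspondence directly in each case, and for \eqref{stat3} you factor the map as the transposition $(n-1,n)$ followed by the \eqref{stat4} bijection rather than treating $\Phi_{n,n-1}$ in one piece. The paper's approach buys reusable lemmas (also exploited in Theorem~\ref{thm21} later on); yours is more self-contained and makes the role of the strict/non-strict asymmetry in the crossing definition more transparent.
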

For that, we need some notation to be defined and some lemmas to be proved. So, we let $\sigma\in S_n$. We say that an integer $i$ is an upper transient (resp., lower transient) of $\sigma$ if and only if $\sigma^{-1}(i)<i<\sigma(i)$ (resp., $\sigma(i)<i<\sigma^{-1}(i)$). The numbers of upper and lower transients of a given permutation $\sigma$ are denoted, respectively, by $\ut(\sigma)$  \text{ and }	$\lt(\sigma)$. With this definition, we have the following remark.
\begin{remark}\label{rem0}
	For any permutation $\sigma$, an integer $i$ is a lower transient of $\sigma$ if  and only if $(i,\sigma^{-1}(i))$ is a lower crossing of $\sigma$.
\end{remark} For any given integer $k$, we also let
$\Ut_k(\sigma):=\{i<k/\sigma^{-1}(i)<i<\sigma(i)\}$ and $\Lt_k(\sigma):=\{i<k/\sigma(i)<i<\sigma^{-1}(i)\}$ denote, respectively,
the sets of upper and lower transients of $\sigma$ less than $k$. Also define
\begin{align*}
\ut_k^-(\sigma)&:=|\Ut_k(\sigma)| \text{ and } \ut_k^+(\sigma):=\ut(\sigma)-\ut_k^-(\sigma),\\
\lt_k^-(\sigma)&:=|\Lt_k(\sigma)| \text{ and } \lt_k^+(\sigma):=\lt(\sigma)-\lt_k^-(\sigma),\\
\alpha_k(\sigma)&:=|\{i\geq k/\sigma(i)<k\}|.
\end{align*}
Observe that in particular we have $\ut_n^-(\sigma)=\ut_{n+1}^-(\sigma)=\ut(\sigma)$ and $\lt_n^-(\sigma)=\lt_{n+1}^-(\sigma)=\lt(\sigma)$, $\alpha_{n}(\sigma)=1-\delta_{n,\sigma(n)}$ and $\alpha_{n+1}(\sigma)=0$ where $\delta$ is the usual Kronecker symbol. Now, let us recall  one needed notation introduced by Rakotomamonjy \cite{Rakot}. Given a permutation $\sigma$  and two integers $a$ and $b$, we let $\sigma^{(a,b)}$ denote the permutation obtained from $\sigma$ in the following way:
\begin{itemize}
	\setlength\itemsep{-0.3em}
	\item add $1$ to each number in $\sigma$ which is greater or equal to $b$,
	\item then insert $b$ at the $a$-th position of the modified $\sigma$.
\end{itemize}
We can simply write $\sigma^{-(a,b)}$ for $(\sigma^{-1})^{(a,b)}$. Example: we have $3142^{(2,\textcolor{gray}{3})}=4\textcolor{gray}{3}152$ and $3142^{-(2,\textcolor{gray}{3})}$\\$=2\textcolor{gray}{3}514$. Next, we prove a fundamental lemma which is a particular case of \cite[Lem. 3.6]{Rakot}. 
\begin{lemma}\label{lem21} 	Let $\sigma \in S_{n}$ and $k\in [n+1]$. We have \begin{equation*}
	\crs(\sigma^{(k,1)})=\crs(\sigma)+\ut_k^-(\sigma)-\lt_k^-(\sigma)+\alpha_k(\sigma).
	\end{equation*}
\end{lemma}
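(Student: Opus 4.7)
The plan is to prove the formula by tracking how each type of crossing changes when we pass from $\sigma$ to $\tau := \sigma^{(k,1)}$. Recall that $\tau(i) = \sigma(i)+1$ for $i < k$, $\tau(k)=1$, and $\tau(i) = \sigma(i-1)+1$ for $i > k$. I will partition the change $\crs(\tau)-\crs(\sigma)$ into three contributions: old upper crossings, old lower crossings, and crossings involving the new arc at position $k$.

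First I would analyze how pairs of ``old'' arcs behave. For two positions $i < j$ in $\sigma$, their images in $\tau$ are positions $i' , j'$ (where $i'=i$ or $i+1$ depending on whether $i<k$), with values $\sigma(i)+1$, $\sigma(j)+1$. A direct case split on whether $i,j$ lie below or above $k$ shows that every old upper crossing $i<j<\sigma(i)<\sigma(j)$ remains an upper crossing in $\tau$, and the only new upper crossings appear in the subcase $i,j<k$ with $j=\sigma(i)$ and $\sigma(i)<\sigma(j)$. Such $j$ satisfies $\sigma^{-1}(j)=i<j<\sigma(j)$, i.e., $j$ is precisely an upper transient of $\sigma$ with $j<k$; these contribute exactly $\ut_k^-(\sigma)$ new crossings.

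The same case split applied to lower crossings shows that the shift by $1$ in values turns the condition $\sigma(j)\le i$ into the strict $\sigma(j)<i$ whenever $j<k$, so lower crossings are lost precisely when $\sigma(j)=i$ with $i<k$. Here $j=\sigma^{-1}(i)$ and $\sigma(i)<\sigma(j)=i<\sigma^{-1}(i)$, so $i$ is a lower transient with $i<k$; by Remark \ref{rem0} these correspond bijectively to $\Lt_k(\sigma)$ and contribute $-\lt_k^-(\sigma)$. When $i\ge k$ (and hence $j>k$ as well) the lower crossing condition is unchanged.

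Finally, I would count crossings involving the new arc $(k,1)$. Since $\tau(k)=1$, the only possible crossings are lower crossings of the form $(k,b)$ with $k<b$ and $\tau(b)\le k$; translating back to $\sigma$, this is $b-1\ge k$ and $\sigma(b-1)<k$, giving exactly $\alpha_k(\sigma)$ such crossings. Summing the three contributions yields $\crs(\tau)-\crs(\sigma)=\ut_k^-(\sigma)-\lt_k^-(\sigma)+\alpha_k(\sigma)$, as claimed. The main obstacle is the bookkeeping in the case split: the subtle point is that a lower crossing $(i,\sigma^{-1}(i))$ disappears both when $\sigma^{-1}(i)<k$ (same-side case) and when $i<k\le\sigma^{-1}(i)$ (split case), and one must verify these two subcases together recover all of $\Lt_k(\sigma)$ without double counting.
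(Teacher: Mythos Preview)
Your proof is correct and follows essentially the same approach as the paper: a direct case analysis of how crossings transform under the insertion $\sigma\mapsto\sigma^{(k,1)}$. The only difference is organizational---the paper groups first by the position of the pair $(i,j)$ relative to $k$ (its sets $A_k$, $B_k$, $C_k$) and then by upper/lower type, whereas you group first by type (old upper, old lower, new arc) and then by position; both routes isolate the same three contributions $+\ut_k^-(\sigma)$, $-\lt_k^-(\sigma)$, $+\alpha_k(\sigma)$. One small wording slip: in your lower-crossing paragraph the strictness $\sigma(j)<i$ is triggered whenever $i<k$ (not merely ``whenever $j<k$''), but you recover the missing split case $i<k\le j$ explicitly in your final paragraph, so the argument is complete.
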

\begin{proof}
	Let $\sigma \in S_n$ and $k \in [n+1]$. Firstly,  we let
	$A_k(\sigma)$ (resp., $B_k(\sigma)$, $C_k(\sigma)$) denote the set of all crossings $(i,j)$ of $\sigma$ such that $j<k$ (resp., $i<k\leq j$, $k\leq i$). We obviously have $\crs(\sigma)=|A_k(\sigma)|+|B_k(\sigma)|+|C_k(\sigma)|$. 
	Let us assume that $\pi=\sigma^{(k,1)}$. By definition, we have 
	\begin{equation*}
	\pi(i)=\sigma(i)+1 \text{ if }  i<k,	\pi(k)=1 \text{ and } \pi(i+1)=\sigma(i)+1   \text{ if } i\geq k.
	\end{equation*}
	Let  $(i,j)$ be a pair of integers such that $i<j$. Based on this definition of $\pi$, we will examine the following three cases:
	\begin{description}
		\item[Case 1:] Suppose that $j<k$. So we have $\pi(i)=\sigma(i)+1$ and $\pi(j)=\sigma(j)+1$.
		\begin{itemize}
			\item Assume that $(i,j)\in A_k(\sigma)$.
			\begin{itemize}
				\item If $i<j<\sigma(i)<\sigma(j)$, then $i<j<\pi(i)<\pi(j)$ and $(i,j)\in A_k(\pi)$, \\[.1in]
				\item If $\sigma(i)<\sigma(j)\leq i<j$, then $\begin{cases}
				\pi(i)<\pi(j)\leq i<j,  &\text{ if } \sigma(j)<i;\\
				\pi(i)\leq i<\pi(j)=i+1\leq j, &\text{ if } \sigma(j)=i.
				\end{cases}$ 
				
				Thus, we have $\begin{cases}
				(i,j)\in A_k(\pi), &\text{ if } \sigma(j)<i;\\
				(i,j)\notin A_k(\pi),& \text{ if } \sigma(j)=i.
				\end{cases}$
			\end{itemize}
			\item Inversely, if $(i,j)\in A_k(\pi)$, the following properties hold:
			\begin{itemize}
				\item if $i<j<\pi(i)<\pi(j)$, then $i<j\leq \sigma(i)<\sigma(j)$. So,
				we have 
				\begin{equation*}
				\begin{cases}
				(i,j)\in A_k(\sigma), &\text{ if }  \pi(i)>j+1  (\text{i.e., }\sigma(i)>j);\\
				(i,j)\notin A_k(\sigma), &\text{ if } \pi(i)=j+1  (\text{i.e., } \sigma(i)=j).
				\end{cases}
				\end{equation*}
				\item if $\pi(i)<\pi(j)\leq i<j$ then $\sigma(i)<\sigma(j)< i<j$, i.e., $(i,j)\in A_k(\sigma)$.  
			\end{itemize}
		\end{itemize}
		Consequently, we obtain the following identity 
		\begingroup\small\begin{equation}\label{eq11}
		|A_k(\sigma)|-|\{i|\sigma(i)<i<\sigma^{-1}(i)<k\}|=|A_k(\pi)|-|\{(i,j)\in A_k(\pi)|i<j<\pi(i)=j+1\}|.
		\end{equation}\endgroup

		\item[Case 2:] Suppose that $i<k\leq j$. We have $\pi(i)=\sigma(i)+1$ and $\pi(j+1)=\sigma(j)+1$.
		\begin{itemize}
			\item Assume that $(i,j)\in B_k(\sigma)$.
			\begin{itemize}
				\setlength\itemsep{-0.2em}
				\item If $i<j<\sigma(i)<\sigma(j)$ then $(i,j+1)\in B_k(\pi)$,
				\item If $\sigma(i)<\sigma(j)\leq i<j$ then  $\begin{cases}
				(i,j+1)\in B_k(\pi), &\text{ if } \sigma(j)<i;\\
				(i,j+1)\notin B_k(\pi), & \text{ if } \sigma(j)=i.
				\end{cases}$
			\end{itemize}
			\item Inversely, if $(i,j)\in B_k(\pi)$,
			\begin{itemize}
				\setlength\itemsep{-0.2em}
				\item if $i<j<\pi(i)<\pi(j)$, then $j>k$ since $\pi(k)=1$. Thus, we have $i<j-1<\sigma(i)<\sigma(j-1)$, i.e., 	$(i,j-1)\in B_k(\sigma)$,
				\item if $\pi(i)<\pi(j)\leq i<j$, then $\sigma(i)<\sigma(j-1)< i<j-1$, i.e., $(i,j-1)\in B_k(\sigma)$.  
			\end{itemize}
		\end{itemize}
		Consequently, we obtain the following identity 
		\begin{equation}\label{eq12}
		|B_k(\sigma)|-|\{i|\sigma(i)<i<k\leq \sigma^{-1}(i)\}|=|B_k(\pi)|.
		\end{equation}
		\item[Case 3:] Suppose now that $k\leq i<j$. We have $\pi(i+1)=\sigma(i)+1$ and $\pi(j+1)=\sigma(j)+1$.
		\begin{itemize}
			\item If $(i,j)\in C_k(\sigma)$, then we have
			\begin{itemize}
				\item if $i<j<\sigma(i)<\sigma(j)$, then $(i+1,j+1)\in C_k(\pi)$,
				\item if $\sigma(i)<\sigma(j)\leq i<j$ then $(i+1,j+1)\in C_k(\pi)$.
			\end{itemize}
			\item Inversely, if $(i,j)\in C_k(\pi)$, we have 
			\begin{itemize}
				\item if $i<j<\pi(i)<\pi(j)$, then $k>i$ since $\pi(k)=1$. Thus, we have $k\leq i-1<j-1<\sigma(i-1)<\sigma(j-1)$, i.e., $(i-1,j-1)\in C_k(\sigma)$,
				\item if $\pi(i)<\pi(j)\leq i<j$, then $\sigma(i-1)<\sigma(j-1)\leq i-1<j-1$. So, we get
				$\begin{cases}
				(i-1,j-1)\in C_k(\sigma), & \text{ if } i>k;\\
				(i-1,j-1)\notin C_k(\sigma), & \text{ if } i=k.\\
				\end{cases}$  
			\end{itemize}
		\end{itemize}
		Similarly to the previous cases, we obtain 
		\begin{equation}\label{eq13}
		|C_k(\sigma)|=|C_k(\pi)|-|\{j>k|\pi(j)\leq k\}|.
		\end{equation}
	\end{description}
	By summing equations \eqref{eq11}, \eqref{eq12} and \eqref{eq13}, using the facts that $|\{i|\sigma(i)<i<\sigma^{-1}(i)<k\}|+|\{i|\sigma(i)<i<k\leq \sigma^{-1}(i)\}|=\lt_k^{-}(\sigma)$, $|\{(i,j)\in A_k(\pi)|i<j<\pi(i)=j+1\}|=|\{j<k|\sigma^{-1}(j)<j<\sigma(j)\}|=\ut_k^{-}(\sigma)$ and $|\{j>k|\pi(j)\leq k\}|=|\{j\geq k|\sigma(j)< k\}|=\alpha_k(\sigma)$, we get 
	\begin{equation}\label{eq14}
	\crs(\sigma)-\lt_k^{-}(\sigma)=\crs(\pi)-\ut_k^{-}(\sigma)-\alpha_k(\sigma).
	\end{equation}
	We deduce from \eqref{eq14} the desired identity of our lemma.
\end{proof}

\begin{lemma}\label{lem22}
	Let $\sigma$ be a given permutation. If $\pi=\sigma^{-1}$ or ${\rm rc}(\sigma)$ then we have \begin{equation*}
	\crs(\pi)=\crs(\sigma)+\ut(\sigma)-\lt(\sigma). 
	\end{equation*}
\end{lemma}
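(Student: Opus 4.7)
The plan is to isolate a cleaner decomposition of crossings and then check that $\sigma^{-1}$ and ${\rm rc}(\sigma)$ affect it in the same way. First, for any $\tau\in S_n$ call a lower crossing $(a,b)$ of $\tau$ \emph{strict} if $\tau(b)<a$ and \emph{non-strict} if $\tau(b)=a$, and let $U(\tau)$ and $L_s(\tau)$ denote the numbers of upper crossings and strict lower crossings. Extending Remark~\ref{rem0}, the map $a\mapsto(a,\tau^{-1}(a))$ gives a bijection between lower transients of $\tau$ and non-strict lower crossings: one direction is immediate, and conversely any non-strict lower crossing $(a,b)$ satisfies $\tau(a)<\tau(b)=a<b=\tau^{-1}(a)$, forcing $a$ to be a lower transient. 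Consequently
\begin{equation*}
\crs(\tau)=U(\tau)+L_s(\tau)+\lt(\tau).
\end{equation*}

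Next I would show that each of the two operations in the lemma interchanges upper crossings with strict lower crossings. For $\pi=\sigma^{-1}$, the assignment $(i,j)\mapsto(\sigma(i),\sigma(j))$ converts an upper crossing $i<j<\sigma(i)<\sigma(j)$ of $\sigma$ into a pair $(a,b)$ with $\pi(a)<\pi(b)<a<b$ in $\pi$, and a strict lower crossing $\sigma(i)<\sigma(j)<i<j$ of $\sigma$ into an upper crossing of $\pi$; the two maps are inverse to each other. For $\pi={\rm rc}(\sigma)$, the central symmetry $(i,\sigma(i))\mapsto(n+1-i,n+1-\sigma(i))$ reverses all four inequalities defining an upper crossing of $\sigma$ and returns a strict lower crossing of $\pi$ at $(n+1-j,n+1-i)$, and symmetrically in the other direction. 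In both cases $U(\pi)=L_s(\sigma)$ and $L_s(\pi)=U(\sigma)$, so the displayed identity yields
\begin{equation*}
\crs(\pi)-\crs(\sigma)=\lt(\pi)-\lt(\sigma).
\end{equation*}

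Finally I would verify that $\lt(\pi)=\ut(\sigma)$ in each case. When $\pi=\sigma^{-1}$ the defining condition $\pi(a)<a<\pi^{-1}(a)$ reads literally $\sigma^{-1}(a)<a<\sigma(a)$, which is the condition that $a$ be an upper transient of $\sigma$. When $\pi={\rm rc}(\sigma)$, substituting $k=n+1-a$ together with $\pi(a)=n+1-\sigma(k)$ and $\pi^{-1}(a)=n+1-\sigma^{-1}(k)$ rewrites the same condition as $\sigma^{-1}(k)<k<\sigma(k)$, producing a bijection $a\leftrightarrow n+1-a$ between lower transients of $\pi$ and upper transients of $\sigma$. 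Plugging this into the previous display delivers the lemma. I expect the main obstacle to be precisely the bookkeeping around the equality case $\sigma(j)=i$ permitted in the definition of a lower crossing: without the strict/non-strict split the natural bijections between upper and lower crossings under each involution fail by exactly $\ut$ or $\lt$ terms, and the decomposition $\crs=U+L_s+\lt$ is introduced specifically to absorb this discrepancy.
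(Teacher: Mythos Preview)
Your proof is correct and follows essentially the same route as the paper. The paper's argument is terser: it observes that both $\mathrm{i}$ and $\mathrm{rc}$ swap upper and lower arcs in the arc diagram (hence swap upper and lower transients), and then invokes Remark~\ref{rem0} to account for the discrepancy; your decomposition $\crs=U+L_s+\lt$ simply makes that last step explicit, which is exactly the bookkeeping the paper leaves to the reader.
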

\begin{proof}
	Let $\sigma \in S_n$ and $\pi=\sigma^{-1}$ or ${\rm rc}(\sigma)$. Noticing 	that i or rc are simple symmetries on the arc diagram, they exchange lower and upper arcs including of course transients. Thus, we have $\ut(\pi)=\lt(\sigma)$ and $\lt(\pi)=\ut(\sigma)$. By this fact, Remark \ref{rem0} explains how we get $\crs(\pi)=\crs(\sigma)+\ut(\sigma)-\lt(\sigma)$ and we complete the proof of our lemma.
\end{proof}

Let $n$ be an integer and $k \in [n]$. Let us now define a bijection $\Phi_{n,k}$ as follows 
\begin{align*}
\Phi_{n,k}:S_{n-1} &\longrightarrow S_{n}^k\\
\sigma&\longmapsto   \sigma^{-(k,1)}. 
\end{align*}
The properties of this bijection allow us to get some relations between $F_n^{n-1}$, $F_n^{n}$ and $F_n$ in Proposition \ref{pro22}  and we use its restricted version  to prove Proposition \ref{prop21}. 
\begin{proposition}\label{prop22}
	The bijection $\Phi_{n,n}$ preserves the number of crossings and, for any $\sigma \in S_{n-1}$, we have
	\begin{equation*}
	\crs(\Phi_{n,n-1}(\sigma))=	
	\begin{cases}
	\crs(\sigma),  &    \text{ if } \sigma(n-1)=n-1; \\
	\crs(\sigma)+1,  &    \text{ if } \sigma(n-1)<n-1.
	\end{cases}.  
	\end{equation*} 	
\end{proposition}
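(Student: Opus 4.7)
The plan is to combine Lemmas~\ref{lem21} and \ref{lem22} and exploit the definition $\Phi_{n,k}(\sigma) = (\sigma^{-1})^{(k,1)}$. Setting $\tau = \sigma^{-1}\in S_{n-1}$, Lemma~\ref{lem21} applied to $\tau$ at position $k$ gives
\begin{equation*}
\crs(\Phi_{n,k}(\sigma))=\crs(\tau^{(k,1)})=\crs(\tau)+\ut_k^-(\tau)-\lt_k^-(\tau)+\alpha_k(\tau),
\end{equation*}
while Lemma~\ref{lem22} applied to $\sigma$ yields $\crs(\tau)=\crs(\sigma)+\ut(\sigma)-\lt(\sigma)$, together with the symmetry $\ut(\tau)=\lt(\sigma)$ and $\lt(\tau)=\ut(\sigma)$. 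So the whole task reduces to evaluating $\ut_k^-(\tau)$, $\lt_k^-(\tau)$ and $\alpha_k(\tau)$ for $\tau\in S_{n-1}$ at $k=n$ and $k=n-1$.

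For $k=n$, since every index of $\tau$ lies in $[n-1]$, the truncation is vacuous: $\ut_n^-(\tau)=\ut(\tau)=\lt(\sigma)$ and $\lt_n^-(\tau)=\lt(\tau)=\ut(\sigma)$, and $\alpha_n(\tau)=0$ because no index satisfies $i\geq n$. Plugging back, all of $\ut(\sigma)$ and $\lt(\sigma)$ cancel and we conclude $\crs(\Phi_{n,n}(\sigma))=\crs(\sigma)$, which proves that $\Phi_{n,n}$ is $\crs$-preserving.

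For $k=n-1$, I would first observe that the integer $n-1$ cannot itself be a transient of $\tau$: an upper transient at $n-1$ would require $\tau(n-1)>n-1$ and a lower transient would require $\tau^{-1}(n-1)>n-1$, both impossible since $\tau\in S_{n-1}$. Hence the truncated counts still coincide with the totals, $\ut_{n-1}^-(\tau)=\ut(\tau)=\lt(\sigma)$ and $\lt_{n-1}^-(\tau)=\lt(\tau)=\ut(\sigma)$, so again the transient contributions cancel. The only remaining term is $\alpha_{n-1}(\tau)=|\{i\geq n-1: \tau(i)<n-1\}|$; since $\tau\in S_{n-1}$ only $i=n-1$ contributes, giving $\alpha_{n-1}(\tau)=1-\delta_{\tau(n-1),\,n-1}=1-\delta_{\sigma(n-1),\,n-1}$. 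Substituting leaves $\crs(\Phi_{n,n-1}(\sigma))=\crs(\sigma)+\alpha_{n-1}(\tau)$, which is exactly the two-case formula stated in the proposition.

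The calculation is essentially bookkeeping once the right lemmas are in place; the only subtle point, and the step I would be most careful with, is verifying that the index $k-1$ (the value $n-1$) can never be a transient of $\tau\in S_{n-1}$, since that is what makes the boundary-term simplification clean. Everything else is algebraic cancellation of the $\pm\ut(\sigma)$ and $\pm\lt(\sigma)$ contributions coming from Lemmas~\ref{lem21} and \ref{lem22}.
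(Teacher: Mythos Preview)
Your argument is correct and follows exactly the route the paper takes: combine Lemma~\ref{lem21} (applied to $\tau=\sigma^{-1}$ at $k=n$ and $k=n-1$) with Lemma~\ref{lem22}, then simplify using the boundary observations $\ut_n^-(\tau)=\ut_{n-1}^-(\tau)=\ut(\tau)$, $\lt_n^-(\tau)=\lt_{n-1}^-(\tau)=\lt(\tau)$, $\alpha_n(\tau)=0$ and $\alpha_{n-1}(\tau)=1-\delta_{n-1,\tau(n-1)}$ already recorded in the paper just before Lemma~\ref{lem21}. The paper's own proof is terse (it simply states ``combining Lemma~\ref{lem21} and Lemma~\ref{lem22}'' and writes down \eqref{eq22}); you have merely made the bookkeeping explicit, including the observation that $n-1$ cannot be a transient of a permutation in $S_{n-1}$, which is precisely what underlies those boundary identities.
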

\begin{proof}
	Combining Lemma \ref{lem21}	and  Lemma \ref{lem22}, it is not difficult to see that, for any $\sigma\in S_{n-1}$, we have 
	\begin{equation}\label{eq22}
	\crs(\sigma^{-(n,1)})=\crs(\sigma) \text{ and  } \crs(\sigma^{-(n-1,1)})=\crs(\sigma)+1-\delta_{n-1,\sigma(n-1)}. 
	\end{equation}
	The proposition comes from \eqref{eq22}. 
\end{proof}

Let $\alpha\oplus \beta$ denote the  \textit{direct sum} of the two given permutations $\alpha$ and $\beta$  defined as follows
\begin{equation*}
\alpha\oplus \beta(i)=\begin{cases} \alpha(i), & \text{ if } i\leq |\alpha|;\\
|\alpha|+\beta(i-|\alpha|), & \text{ if } i> |\alpha|.
\end{cases}
\end{equation*}
Example: $1432\oplus 4231=14328675$. An obvious property of the direct sum that  we need is   $\crs(\alpha\oplus \beta)=\crs(\alpha)+ \crs(\beta)$ for any permutations $\alpha$ and $\beta$. 

\begin{proposition} \label{pro22}Let $n$ be a non-negative integer. The following recurrences hold
	\begin{align*}
	F_{n}^n(q)&=F_{n-1}(q) \text{ for } n\geq 1, \\
	\text{ and } F_{n}^{n-1}(q)&=qF_{n-1}(q)+(1-q)F_{n-2}(q) \text{ for } n\geq 2.
	\end{align*}
\end{proposition}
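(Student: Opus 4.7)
The plan is to read the two recurrences directly off the bijection $\Phi_{n,k}:S_{n-1}\to S_n^k$ together with Proposition \ref{prop22}, which records how $\crs$ behaves under $\Phi_{n,n}$ and $\Phi_{n,n-1}$.

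First I would handle the identity $F_n^n(q)=F_{n-1}(q)$. Since $\Phi_{n,n}$ is a bijection from $S_{n-1}$ onto $S_n^n$ and, by Proposition \ref{prop22}, it preserves $\crs$, summing $q^{\crs(\cdot)}$ on both sides gives the claim in one line.

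For the second recurrence, I would use $\Phi_{n,n-1}$ as a bijection $S_{n-1}\to S_n^{n-1}$ and split the sum over $S_{n-1}$ according to whether $\sigma(n-1)=n-1$ or $\sigma(n-1)<n-1$. By Proposition \ref{prop22},
\begin{equation*}
F_n^{n-1}(q)=\sum_{\substack{\sigma\in S_{n-1}\\ \sigma(n-1)=n-1}}q^{\crs(\sigma)}+q\sum_{\substack{\sigma\in S_{n-1}\\ \sigma(n-1)<n-1}}q^{\crs(\sigma)}.
\end{equation*}
The key observation for the first sum is that a permutation $\sigma\in S_{n-1}$ with $\sigma(n-1)=n-1$ can be written uniquely as $\sigma=\sigma'\oplus 1$ with $\sigma'\in S_{n-2}$, and $\crs(\sigma'\oplus 1)=\crs(\sigma')$ by the additivity of $\crs$ under $\oplus$ recalled just before Proposition \ref{pro22}. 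Hence the first sum equals $F_{n-2}(q)$, and the second sum equals $F_{n-1}(q)-F_{n-2}(q)$. Combining gives
\begin{equation*}
F_n^{n-1}(q)=F_{n-2}(q)+q\bigl(F_{n-1}(q)-F_{n-2}(q)\bigr)=qF_{n-1}(q)+(1-q)F_{n-2}(q),
\end{equation*}
as desired.

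There is no real obstacle here: all the combinatorial content has been packaged into Proposition \ref{prop22} (itself built on Lemmas \ref{lem21} and \ref{lem22}). The only point that needs a line of justification is the bijection $\sigma\mapsto\sigma'$ between fixed-point-at-the-end permutations in $S_{n-1}$ and $S_{n-2}$, which is immediate from the definition of $\oplus$.
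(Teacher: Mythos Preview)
Your proof is correct and follows essentially the same approach as the paper: both use the $\crs$-preservation of $\Phi_{n,n}$ for the first identity, and for the second split the sum via $\Phi_{n,n-1}$ according to whether $\sigma(n-1)=n-1$, then identify the fixed-last-point subsum with $F_{n-2}(q)$ via the bijection $\sigma'\mapsto\sigma'\oplus 1$. The only cosmetic difference is that the paper phrases the intermediate step using the notation $F_{n-1,n-1}(q)$ before reducing it to $F_{n-2}(q)$.
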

\begin{proof} Since the bijection $\Phi_{n,n}$ is \crs-preserving, we have $F_{n}^n(q)=F_{n-1}(q)$.
	Now, using the property of the bijection $\Phi_{n,n-1}$, we get
	\begin{align*}
	F_{n}^{n-1}(q)&=q\times \sum_{\substack{\sigma \in S_{n-1},\\ \sigma(n-1)\neq n-1}} q^{\crs(\sigma)}+\sum_{\substack{\sigma \in S_{n-1},\\ \sigma(n-1)= n-1}} q^{\crs(\sigma)}\\
	&=q\left(  F_{n-1}(q)-F_{n-1,n-1}(q)\right) +F_{n-1,n-1}(q)
	\end{align*}	
	Since 
	$\displaystyle F_{n,n}(q)=\sum_{\sigma\oplus 1 \in S_{n}} q^{\crs(\sigma\oplus 1)}=\sum_{\sigma \in S_{n-1}} q^{\crs(\sigma)}=F_{n-1}(q)$ for all $n\geq 1$, we consequently obtain 
	\begin{equation*}
	F_{n}^{n-1}(q)=qF_{n-1}(q)+(1-q)F_{n-2}(q) \text{ for all $n\geq 1$}.
	\end{equation*}	
	This  ends the proof of the proposition.
\end{proof}	
We may observe that Proposition \ref{prop21} is  none other than a restricted version of Proposition \ref{pro22}. In fact, the effect of restriction totally changes the obtained relations. For example, we have $F_{n}^{n}(321; q)= 1\neq F_{n-1}(321; q)$.  Now, we can give the poof of Proposition \ref{prop21}. 
\begin{proof}
	Our proof is simply based on the following obvious fact. Let $T$ be a subset of $S_m$ for any integer $m>1$. For any integer $n\geq m$, we have
	\begin{itemize}
		\setlength\itemsep{-0.3em}
		\item[(i)] If $k<\min T^{-1}(1)$, we have $\sigma^{(k,1)} \in S_n^k(T)$ if and only if $\sigma \in S_{n-1}(T)$.
		\item[(ii)] If $n-m+\max T^{-1}(1)<k\leq n$, we have $\sigma^{-(k,1)} \in S_n^k(T)$ if and only if $\sigma \in S_{n-1}(T^{-1})$.
	\end{itemize}
	The two first relations \eqref{stat1} and \eqref{stat2} of Proposition \ref{prop21} use the (i) of the fact. If $\min T^{-1}(1)\neq 1$, then  we  have $1\oplus \sigma \in S_n^1(T)$ if and only if $\sigma \in S_{n-1}(T)$ for any  $n\geq 1$. Thus we get relation \eqref{stat1} as follows
	\begin{equation*}
	F_n^1(T;q)=\sum_{1\oplus \sigma \in S_{n}^1(T)} q^{\crs(1\oplus \sigma)}=\sum_{\sigma \in S_{n-1}(T)} q^{\crs(\sigma)}=F_{n-1}(T;q).
	\end{equation*}	
	By the same way, if $\min T^{-1}(1)>2$, we  have $\sigma^{(2,1)} \in S_n^2(T)$ if and only if $\sigma \in S_{n-1}(T)$  for any  $n\geq 1$. Moreover, we have $\crs(\sigma^{(2,1)})=\crs(\sigma)+1-\delta_{1,\sigma(1)}$ for any permutation $\sigma$ (see Lemma \ref{lem21}). By applying \eqref{stat1}, we also get \eqref{stat2} as follows
	\begin{align*}
	F_{n}^{2}(T;q) &= q\times \sum_{\substack{\sigma \in S_{n-1}(T),\\ \sigma(1)\neq 1}} q^{\crs(\sigma)}+\sum_{\substack{\sigma \in S_{n-1}(T),\\ \sigma(1)= 1}}q^{\crs(\sigma)}\\
	&=q\left( F_{n-1}(T;q)-F_{n-1}^1(T;q)\right) +F_{n-1}^1(T;q)\\
	&=qF_{n-1}(T;q)+(1-q)F_{n-2}(T;q).
	\end{align*}	
	For the two last relations \eqref{stat3} and \eqref{stat4} of the proposition, we obviously use the (ii)  of the fact and we also exploit the bijections $\Phi_{n,n}$ and $\Phi_{n,n-1}$. 
	If $\max T^{-1}(1)<m-1$ (i.e., $n-m+\max T^{-1}(1)<n-1$), we have $\sigma^{-(n-1,1)} \in S_n^{n-1}(T)$ if and only if $\sigma \in S_{n-1}(T^{-1})$.  This implies that we have  $\Phi_{n,n-1}(S_{n-1}(T^{-1}))=S_n^{n-1}(T)$. Using the property of the bijection $\Phi_{n,n-1}$ described in Theorem \ref{prop22}, we get \eqref{stat3} as follows
	\begin{align*}
	F_{n}^{n-1}(T;q) &= q\times \sum_{\substack{\sigma \in S_{n-1}(T^{-1}),\\ \sigma(n-1)\neq n-1}} q^{\crs(\sigma)}+\sum_{\substack{\sigma \in S_{n-1}(T^{-1}),\\ \sigma(n-1)= n-1}}q^{\crs(\sigma)}\\
	&=q\left( F_{n-1}(T^{-1};q)-F_{n-1,n-1}(T^{-1};q)\right) +F_{n-1,n-1}(T^{-1};q)\\
	&=qF_{n-1}(T^{-1};q)+(1-q)F_{n-1,n-1}(T^{-1};q).
	\end{align*}
	Notice that we generally have $F_{n,n}(T;q)\neq F_{n-1}(T;q)$ since the set $S_{n,n}(T)$ depends on $T$. By  the same way we obtain the last relation \eqref{stat4} using the \crs-preserving of the bijection $\Phi_{n,n}$. This ends the proof of Proposition \ref{prop21}.  
\end{proof}
Let us end this preliminary section with some illustrations of Proposition \ref{prop21}. Since $\max\{321\}^{-1}(1)=3>2$, by applying \eqref{stat2}
we get
\begin{equation*}
F_{n}^{2}(321;q)=qF_{n-1}(321;q)+(1-q)F_{n-2}(321;q) \text{ for } n\geq 2.
\end{equation*}
Since $123^{-1}=123$ and $\max\{123\}^{-1}(1)=1<2$, we can also apply \eqref{stat3} and get
\begin{equation*}
F_{n}^{n-1}(123;q)=qF_{n-1}(123;q)+(1-q)F_{n-1,n-1}(123;q).
\end{equation*}
Since $S_{n,n}(123)=\{(n-1)\cdots 21n\}$, then  we have $F_{n,n}(123;q)=1$ and  we consequently	obtain
\begin{equation*}
F_{n}^{n-1}(123;q)=qF_{n-1}(123;q)+1-q.
\end{equation*}

\section{Proof of the main results}\label{sec3}
In this section, we will establish the proof of our results presented in Section \ref{sec1}. As fundamental tools,  we use the Proposition \ref{prop21} proved in the preceding section and the $\crs$-preserving of the involution $\rci$ proved by Rakotomamonjy \cite{Rakot}. For that, we let $F(T;q,z):=\sum_{\sigma\in S(T)}q^{\crs(\sigma)}z^{|\sigma|}$ for any set of patterns $T$.

\subsection{Proof of Theorem \ref{thm1}}\label{sec31}

\begin{proof}
	It is obvious to see that we have $S_n(321,231)=S_n^1(321,231)\cup S_n^2(321,231)$ for all $n$. So we get
	\begin{equation*}
	F_n(321,231;q)=F_n^1(321,231;q)+F_n^2(321,231;q).
	\end{equation*}
	Since $\min\{321,231\}^{-1}(1)=3>2$, we can apply the relations \eqref{stat1} and \eqref{stat2} of proposition \ref{prop21}
	and we get 
	\begin{equation}\label{rec31}
	F_n(321,231;q)=(1+q)F_{n-1}(321,231;q)+(1-q)F_{n-2}(321,231;q), \text{ for $n\geq 2$}.
	\end{equation}
	Recurrence \eqref{rec31} is associated with the following functional equation  
	\begin{equation*}
	F(321,231; q,z)=1+z+(1+q)z(F(321,231; q,z)-1)+(1-q)z^2F(321,231; q,z).
	\end{equation*}
	Solving it by $F(321,231; q,z)$, we obtain the following identity equivalent to identity \eqref{main31} of Theorem \ref{thm1}:
	\begin{equation*}
	F(321,231; q,z)=\frac{1-qz}{1-(1+q)z-(1-q)z^2}.
	\end{equation*}
	
	As structure, we  also have $S_n(123,132)=S_n^{n-1}(123,132)\cup S_n^{n}(123,132)$. Since we have $\max\{123,132\}^{-1}(1)=1<2$, we can also apply the two relations \eqref{stat3} and \eqref{stat4} of Proposition \ref{prop21}. Thus, since $\{123,132\}^{-1}=\{123,132\}$,	we get   from \eqref{stat3}
	\begin{equation}
	F_n^n(123,132;q)=F_{n-1}(123,132;q). \label{eq:proof31}
	\end{equation}
	Moreover, since $F_{n,n}(123,132;q)=1$, we get from \eqref{stat4}
	\begin{equation} 
	F_n^{n-1}(123,132;q)=qF_{n-1}(123,132;q)+1-q. \label{eq:proof32}
	\end{equation}
	Summing \eqref{eq:proof31} and \eqref{eq:proof32}, we obtain the following recurrence:
	\begin{equation}\label{eq:proof33}
	F_{n}(123,132;q)=(1+q)F_{n-1}(123,132;q)+1-q \text{ for } n\geq 2.
	\end{equation}
	Recurrence \eqref{eq:proof33} corresponds to the following functional equation:
	\begin{equation*}
	F(123,132;q,z)=1+z+(1+q)z(F(123,132;q)-1)+z\left( \frac{1}{1-z}-1-z\right).
	\end{equation*} 
	When solving this last equation by $F(123,132;q,z)$, we obtain 
	\begin{equation*}
	F(123,132;q,z)=1+\frac{z(1-qz)}{(1-z)(1-(1+q)z)}.
	\end{equation*}
	Finally, since $\{123,213\}=\rci(\{123,132\})$, we also have $F(123,132;q,z)=F(123,213;q,z)$.
	This completes the proof of identity \eqref{main32} of Theorem \ref{thm1} and Theorem \ref{thm1} itself.
\end{proof}

Notice that when we solve  \eqref{eq:proof33} with the initial condition $F_{1}(123,132;q)=1$, we obtain the following closed form:
\begin{equation*}
\sum_{\sigma\in S_n(123,\tau)}q^{\crs(\sigma)}=\frac{(1+q)^{n-1}-1+q}{q}  \text{ for $n\geq 1$ and $\tau \in \{132,213\}$}.
\end{equation*}
Furthermore, when we substitute  $F_{n-1}(123,132;q)$ by $\frac{(1+q)^{n-2}-1+q}{q}$ for $n\geq 2$, we also get from \eqref{eq:proof32}
\begin{equation*}
\sum_{\sigma\in S_n^{n-1}(123,132)}q^{\crs(\sigma)}=(1+q)^{n-2}  \text{ for $n\geq 2$}.
\end{equation*}
Since $\rci(S_n^{n-1}(123,132))=S_{n,2}(123,213)$, we also get
\begin{equation*}
\sum_{\sigma\in S_{n,2}(123,213)}q^{\crs(\sigma)}=(1+q)^{n-2}  \text{ for $n\geq 2$}.
\end{equation*}
\begin{corollary}\label{cor32} For $n\geq 2$  and  $k\geq 0$, we have 
	\begin{equation*}
	|\{\sigma \in S_n^{n-1}(123,132)|\crs(\sigma)=k\}|=|\{\sigma \in S_{n,2}(123,213)|\crs(\sigma)=k\}|=\binom{n-2}{k}. 
	\end{equation*} 
\end{corollary}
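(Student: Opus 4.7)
The proof is essentially immediate from the two polynomial identities that have just been displayed in the paragraph preceding the corollary. My plan is to simply extract the coefficient of $q^k$ from each side and invoke the binomial theorem.

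More concretely, I would begin by recalling that the computation following \eqref{eq:proof33} established
\begin{equation*}
\sum_{\sigma\in S_n^{n-1}(123,132)}q^{\crs(\sigma)}=(1+q)^{n-2} \quad \text{for } n\geq 2,
\end{equation*}
and the $\rci$-preservation of $\crs$ (from \cite{Rakot}) together with the identity $\rci(S_n^{n-1}(123,132))=S_{n,2}(123,213)$ gave the same generating polynomial for $S_{n,2}(123,213)$. Since these two polynomials are equal, their coefficients match, so we already have the first equality in the corollary for free, with no further combinatorial argument needed.

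For the explicit binomial evaluation, I would expand $(1+q)^{n-2}$ by the binomial theorem as
\begin{equation*}
(1+q)^{n-2}=\sum_{k\geq 0}\binom{n-2}{k}q^k,
\end{equation*}
and then read off the coefficient of $q^k$ on both sides to conclude that the common value of $|\{\sigma \in S_n^{n-1}(123,132)\mid \crs(\sigma)=k\}|$ and $|\{\sigma \in S_{n,2}(123,213)\mid \crs(\sigma)=k\}|$ equals $\binom{n-2}{k}$, which is exactly the corollary.

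Since the corollary is really just a restatement of the preceding polynomial identities coefficient-by-coefficient, there is no genuine obstacle to overcome — the entire content has been proved already in the discussion leading up to the corollary. The only thing the proof needs to do is make explicit the passage from the generating polynomial $(1+q)^{n-2}$ to its binomial coefficients, which is a one-line application of the binomial theorem.
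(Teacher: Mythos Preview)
Your proposal is correct and matches the paper's approach exactly: the paper states Corollary~\ref{cor32} without a separate proof environment, since it is an immediate consequence of the two displayed identities $\sum_{\sigma\in S_n^{n-1}(123,132)}q^{\crs(\sigma)}=(1+q)^{n-2}$ and $\sum_{\sigma\in S_{n,2}(123,213)}q^{\crs(\sigma)}=(1+q)^{n-2}$ derived just before it. Your explicit extraction of the coefficient of $q^k$ via the binomial theorem is precisely the intended reading.
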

We observe that Corollary \ref{cor32} is a new combinatorial interpretation of the Pascal triangle \seqnum{A007318}  in terms of crossings over restricted permutations.  Finding  a bijection with subsets of a given size to get a direct proof of Corollary \ref{cor32} may be interesting and staying open.

\subsection{Proof of Theorem \ref{thm2}} \label{sec32}
In this subsection, we will establish the proof of the result concerning some relationships between the distributions of crossings over the sets $S_n(312,T)$ and $S_n(231,T)$, where $T$ is empty or a singleton of $\{123,132,213\}$. As we did in the preceding subsection, we will first find recurrences and we then compute the corresponding generating functions to get the desired relations.

\begin{proposition}\label{prop33}
	For all integer $n\geq 1$, we have 
	\begin{equation}\label{rec1}
	F_n(312;q)=\sum_{j=0}^{n-1}F_{j}(231;q)F_{n-1-j}(312;q).
	\end{equation} 
\end{proposition}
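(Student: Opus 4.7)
The plan is to decompose each $\sigma\in S_n(312)$ according to the position $k$ of the entry $1$, writing $\sigma(k)=1$. Avoiding $312$ forces every value to the left of position $k$ to be smaller than every value to the right of it, since otherwise $\sigma(i)>\sigma(\ell)$ with $i<k<\ell$ would yield a $312$ pattern $\sigma(i),1,\sigma(\ell)$. Thus $\sigma$ splits as $(\alpha',1,\beta')$ where $\alpha'$ occupies positions $1,\ldots,k-1$ with values $\{2,\ldots,k\}$ and $\beta'$ occupies positions $k+1,\ldots,n$ with values $\{k+1,\ldots,n\}$. Standardizing the two blocks gives $\alpha\in S_{k-1}(312)$ and $\beta\in S_{n-k}(312)$, and these standardizations are clearly bijective.

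The key step is to show that crossings split additively. A short case analysis shows that no crossing of $\sigma$ can join the two blocks, nor can it involve the lower arc from $k$ to $1$; for instance an upper crossing $i<j<\sigma(i)<\sigma(j)$ with $i\le k-1$ and $j\ge k+1$ would require $j<\sigma(i)\le k$, which is impossible. Hence $\crs(\sigma)=\crs(\alpha')+\crs(\beta')$. The right block $\beta'$ is obtained from $\beta$ by a joint shift of positions and values by $k$, preserving every relative order, so $\crs(\beta')=\crs(\beta)$. The left block is more subtle because $\alpha'(i)=\alpha(i)+1$ shifts only the values. Comparing the defining inequalities of upper and lower crossings shows that the upper crossings of $\alpha'$ are exactly those of $\alpha$ together with one extra for every upper transient of $\alpha$, while the lower crossings of $\alpha'$ are those of $\alpha$ minus one for every lower transient. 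Therefore
\begin{equation*}
\crs(\alpha')=\crs(\alpha)+\ut(\alpha)-\lt(\alpha),
\end{equation*}
which by Lemma~\ref{lem22} equals $\crs(\alpha^{-1})$.

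Since $\{312\}^{-1}=\{231\}$, inversion is a bijection from $S_{k-1}(312)$ onto $S_{k-1}(231)$. Summing over $k$ and factoring the independent contributions of $\alpha$ and $\beta$ gives
\begin{equation*}
F_n(312;q)=\sum_{k=1}^{n}F_{k-1}(231;q)\,F_{n-k}(312;q)=\sum_{j=0}^{n-1}F_j(231;q)\,F_{n-1-j}(312;q),
\end{equation*}
which is the desired identity. I expect the main obstacle to be the bookkeeping in the second paragraph: the value-only shift on the left block does \emph{not} preserve crossings, and only by recognizing that the resulting discrepancy is precisely $\ut(\alpha)-\lt(\alpha)$ — exactly the quantity appearing in Lemma~\ref{lem22} — can the left-block contribution be recast as an $S(231)$ generating function, which is what makes $231$ appear on the right-hand side of the recurrence.
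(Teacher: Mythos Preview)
Your proof is correct and follows essentially the same route as the paper. Both decompose $\sigma\in S_n(312)$ by the position of $1$, obtain the direct-sum splitting $\sigma_1\oplus\sigma_2$ with $\sigma_1\in S_j^j(312)$ and $\sigma_2\in S_{n-j}(312)$, and then convert the left factor into an $S_{j-1}(231)$ contribution via the inverse. The only difference is packaging: the paper invokes relation~\eqref{stat4} of Proposition~\ref{prop21} (which in turn rests on the $\crs$-preserving bijection $\Phi_{j,j}$) to get $F_j^j(312;q)=F_{j-1}(231;q)$ in one line, whereas you unfold that argument inline by computing $\crs(\alpha')=\crs(\alpha)+\ut(\alpha)-\lt(\alpha)$ and then applying Lemma~\ref{lem22}.
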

\begin{proof}
	We have $S_n^j(312)=\{\sigma_1\oplus \sigma_2|\sigma_1\in S_j^j(312),\sigma_2\in S_{n-j}(312)\}$ for all $j\geq 1$. So, we get using  \eqref{stat4}  the following identities:
	\begin{equation*}
	F_n^j(312;q)=F_{j}^j(312;q) F_{n-j}(312;q)= F_{j-1}(231;q) F_{n-j}(312;q), \text{ for $1\leq j\leq n$}.
	\end{equation*}  
	By summing $F_n^j(312;q)$ over $j\in [n]$, we obtain the desired relationship for $F_n(312;q)$.
\end{proof}

\begin{proposition} \label{prop32} For all integer $n\geq 2$, we have	 
	\begin{equation}\label{rec3}
	F_n(123,312;q)=n-1+F_{n-1}(123,231;q).
	\end{equation}	
\end{proposition}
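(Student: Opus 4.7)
The plan is to decompose $F_n(123,312;q)=\sum_{k=1}^n F_n^k(123,312;q)$ by the position $k$ of the value $1$. The slice $k=n$ is handled directly by Proposition \ref{prop21}: for $T=\{123,312\}$ we have $T^{-1}=\{123,231\}$, so $\max T^{-1}(1)=2<3=m$ and the relation \eqref{stat4} yields $F_n^n(123,312;q)=F_{n-1}(123,231;q)$. What remains is to show that for each $1\le k\le n-1$ the set $S_n^k(123,312)$ contains exactly one permutation $\sigma_k$ and that $\crs(\sigma_k)=0$; the $n-1$ slices then contribute $n-1$, and the identity follows.

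To pin down $\sigma_k$ for $1\le k\le n-1$, I would run three forbidden-pattern arguments in sequence on an arbitrary $\sigma\in S_n^k(123,312)$. First, any $k<i<j$ with $\sigma(i)<\sigma(j)$ produces a $123$-pattern $1,\sigma(i),\sigma(j)$, so $\sigma(k+1)>\sigma(k+2)>\cdots>\sigma(n)$. Second, any $i<k$ with $\sigma(i)>\sigma(n)$ produces a $312$-pattern $\sigma(i),1,\sigma(n)$, so every value at a position before $k$ is $\le\sigma(n)$; consequently every value strictly greater than $\sigma(n)$ must occur in the strictly decreasing block $\sigma(k+1),\ldots,\sigma(n)$, and matching cardinalities forces $\sigma(n)=k+1$, so that block is exactly $n,n-1,\ldots,k+1$. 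Third, once $\sigma(k+1)=n$ is known, any $i<j<k$ with $\sigma(i)<\sigma(j)$ yields a $123$-pattern $\sigma(i),\sigma(j),n$, so the values at positions $<k$ are decreasing; since they form a subset of $\{2,\ldots,k\}$ of size $k-1$, they must be $k,k-1,\ldots,2$. Hence $\sigma=\sigma_k:=k(k-1)\cdots 2\,1\,n(n-1)\cdots(k+1)$ is the unique element of $S_n^k(123,312)$.

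Finally, $\sigma_k$ is the direct sum of the two reversed identities $k(k-1)\cdots 1$ and $(n-k)(n-k-1)\cdots 1$, and since $\crs$ is additive under $\oplus$ it suffices to note that a reversed identity $m(m-1)\cdots 1$ satisfies $\sigma(i)>\sigma(j)$ for every $i<j$, violating the final inequality in both the upper-crossing and lower-crossing definitions; thus $\crs(\sigma_k)=0$. Summing,
\[
F_n(123,312;q)=\sum_{k=1}^{n-1}1+F_{n-1}(123,231;q)=n-1+F_{n-1}(123,231;q).
\]
The main obstacle is arranging the uniqueness argument in the correct logical order: Step 2 must fix $\sigma(n)=k+1$ (and hence $\sigma(k+1)=n$) before Step 3 can use $\sigma(k+1)=n$ as the pivot that forces the prefix to be decreasing; once this choreography is in place, the computation of $\crs(\sigma_k)$ and the summation are routine.
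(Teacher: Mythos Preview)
Your proof is correct and follows essentially the same approach as the paper: both decompose $S_n(123,312)$ according to the position of $1$, identify the unique permutation $\pi_k=k(k-1)\cdots 21\,n(n-1)\cdots(k+1)$ in each slice $k<n$, verify $\crs(\pi_k)=0$, and invoke \eqref{stat4} for the slice $k=n$. The only difference is that you supply full details for the uniqueness of $\pi_k$ and for $\crs(\pi_k)=0$, which the paper merely asserts.
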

\begin{proof}
	We have $S_n(123,312)=\{\pi_1,\pi_2,\ldots, \pi_{n-1}\}\cup S_n^n(123,312)$ with $\pi_j=j\cdots 21n(n-1)\cdots(j+1)$ for all $j\in [n]$. So we get
	\begin{equation*}
	F_n(123,312;q)=\sum_{j=1}^{n-1}q^{\crs(\pi_j)}+F_n^n(123,312;q).
	\end{equation*}
	It is not difficult to see that we have $\crs(\pi_j)=0$ for all $j\in [n]$. Thus, we immediately obtain the proposition using again \eqref{stat4}.
\end{proof}

\begin{proposition} \label{prop34} For any $\tau_1,\tau_2$ and $\tau_3 \in \{132,213\}$ and for all $n\geq 2$, we have	 
	\begin{equation}\label{rec4}
	F_n(312,\tau_1;q)=F_{n-1}(312,\tau_2;q)+F_{n-1}(231,\tau_3;q).
	\end{equation}
\end{proposition}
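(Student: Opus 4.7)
The plan is to apply Proposition \ref{prop21} to the specific pair $T=\{312,213\}$ in order to prove the identity when $\tau_1=\tau_2=\tau_3=213$, and then propagate it to every other choice of $\tau_i\in\{132,213\}$ using the $\crs$-preserving involution $\rci$.

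First I would establish the structural decomposition $S_n(312,213)=S_n^1(312,213)\sqcup S_n^n(312,213)$ for $n\geq 2$. If $\sigma$ avoids both $312$ and $213$ and $\sigma(j)=1$ with $1<j<n$, then the triple $(\sigma(1),\sigma(j),\sigma(n))$ has its minimum at the middle index; depending on whether $\sigma(1)<\sigma(n)$ or $\sigma(1)>\sigma(n)$, the relative order of this triple is exactly $213$ or $312$, a contradiction. So the value $1$ must sit at position $1$ or at position $n$.

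Next I would apply Proposition \ref{prop21} to $T=\{312,213\}$. Here $T^{-1}=\{231,213\}$ and $T^{-1}(1)=\{2\}$. Since $\min T^{-1}(1)=2>1$, identity \eqref{stat1} gives $F_n^1(312,213;q)=F_{n-1}(312,213;q)$; since $\max T^{-1}(1)=2<3$, identity \eqref{stat4} gives $F_n^n(312,213;q)=F_{n-1}(231,213;q)$. Summing these over the disjoint decomposition above yields
\[ F_n(312,213;q)=F_{n-1}(312,213;q)+F_{n-1}(231,213;q), \]
which is exactly \eqref{rec4} in the case $\tau_1=\tau_2=\tau_3=213$.

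Finally, I would verify by direct computation on the three generators that $\rci(\{312,132\})=\{312,213\}$ and $\rci(\{231,132\})=\{231,213\}$. Combined with the $\crs$-preserving property of $\rci$ and the Simion--Smith identity \eqref{eqfunda}, this forces $F_n(312,132;q)=F_n(312,213;q)$ and $F_n(231,132;q)=F_n(231,213;q)$. Since the right-hand side of \eqref{rec4} depends on $\tau_2$ and $\tau_3$ only through these two Wilf-equivalence classes, the recurrence proven in the previous paragraph extends without modification to every triple $(\tau_1,\tau_2,\tau_3)\in\{132,213\}^3$. The one point that requires attention is noticing that Proposition \ref{prop21} is simultaneously applicable in the directions \eqref{stat1} and \eqref{stat4} to the specific set $\{312,213\}$ (it is not applicable in this symmetric way to $\{312,132\}$, which is precisely why the choice of representative matters); once this is seen, the argument becomes purely a bookkeeping of $\rci$-symmetries.
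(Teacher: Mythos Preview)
Your proof is correct and follows essentially the same route as the paper: decompose $S_n(312,213)$ according to whether $1$ sits at position $1$ or $n$, apply \eqref{stat1} and \eqref{stat4} of Proposition~\ref{prop21}, and then transport the identity to the other choices of $\tau_i$ via the $\crs$-preserving involution $\rci$. Your write-up is in fact more careful than the paper's own proof---you justify the structural decomposition, compute $T^{-1}(1)$ explicitly, and note why $\{312,213\}$ (rather than $\{312,132\}$) is the representative on which both \eqref{stat1} and \eqref{stat4} simultaneously apply.
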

\begin{proof}
	Since $S_n(312,213)=S_{n}^1(312,213)\cup S_n^{n}(312,213)$, we get
	\begin{equation*}
	F_n(312,213;q)=F_{n}^{1}(312,213;q)+F_{n}^{n}(231,213;q).
	\end{equation*}
	So  for all  $n\geq 2$, we get from \eqref{stat1} and \eqref{stat2} the following identity:
	\begin{equation*}
	F_n(312,213;q)=F_{n-1}(312,213;q)+F_{n-1}(231,213;q).
	\end{equation*}
	To complete the proof of the proposition, we just use the facts that $\rci(\{312,132\})= \{312,213\}$ and $\rci(\{231,132\})= \{231,213\}$.
\end{proof}
Now, to prove Theorem \ref{thm2}, we just compute the corresponding generating functions of the three recurrences \eqref{rec1},  \eqref{rec3}  and \eqref{rec4} and deduce the desired relations. 

\begin{proof}
	From \eqref{rec1}, we obtain the functional equation  
	\begin{equation*}
	F(312; q,z)=1+zF(312; q,z).F(231; q,z) 
	\end{equation*}
	which leads to 
	\begin{equation*}
	F(312;q,z)=\frac{1}{1-zF(231;q,z)}.
	\end{equation*}
	
	The associated generating function with \eqref{rec3} is \begin{equation*}
	F(123,312;q,z)=1+z+\left(\frac{z}{1-z}\right)^2 + z (F(123,231; q,z)-1).
	\end{equation*} This functional equation is equivalent to the following one:
	\begin{equation*}
	F(312,123; q,z) 1+\left(\frac{z}{1-z} \right)^2 +zF(231,123; q,z).
	\end{equation*}
	
	From \eqref{rec4}, when we set $\tau=\tau_1=\tau_2$ and $\tau'=\tau_3$, we get the functional equation \begin{equation*}
	F(312,\tau;q,z)=1+z+z\left( F(312,\tau;q,z)+F(231,\tau';q,z)-2\right).
	\end{equation*}
	Solving it for $F(312,\tau; q,z)$, we obtain 
	\begin{equation*}	F(312,\tau; q,z)= 1+\left( \frac{z}{1-z}\right)F(231,\tau'; q,z)  \text{ for any } (\tau,\tau') \in   \{132,213\}^2. 	
	\end{equation*} 
	This completes the proof of Theorem \ref{thm2}.
\end{proof}

\section{Additional results}	\label{sec4}
We end this paper with two additional results. The first one is about $F_n(321,\tau;q)$, with $\tau\in \{213,132\}$. The second one is inspired from the first section and is about a $\crs$-preserving bijection between $S_{n}^k$ and $S_{n}^{n+1-k}$.

For the first result, we remark that the distribution of $\crs$  over the set of permutations avoiding one of the pairs $\{321,213\}$ and $\{321,132\}$ can be computed. One of the tools that we may use is an interesting relationship proved by Randrianarivony \cite{ARandr}. He showed how  the statistic $\crs$ is related to other usual statistics through the following identity:
\begin{equation}\label {eqfin}
\crs(\sigma)=\inv(\sigma)-\exc(\sigma)-2\nes(\sigma),
\end{equation}
where, for any permutation $\sigma$, $\inv(\sigma):=|\{(i,j)|i<j \text{ and } \sigma(i)>\sigma(j) \}|$, $\exc(\sigma):=|\{i| \sigma(i)>i\}|$ and $\nes(\sigma):=|\{(i,j)|i<j<\sigma(j)<\sigma(i) \text{ or } \sigma(j)<\sigma(i)\leq i<j\}|$ are respectively the numbers of inversions, excedances and nestings of $\sigma$. Below is an unexpected result in which we try to use identity \eqref{eqfin} to get the proof. 

\begin{theorem} \label{thm41} Let $[n]_q=1+q+\cdots+q^{n-1}$  for any integer $n\geq 1$. For any $\tau \in\{132,213\}$, we have 
	\begin{equation*}
	\sum_{\sigma\in S_n(321,\tau)}q^{\crs(\sigma)}=1+\displaystyle\sum_{k=1}^{n-1}[n-k]_{q^k}.
	\end{equation*} 
\end{theorem}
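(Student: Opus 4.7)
The plan is to prove the formula for $\tau = 132$ by a direct structural decomposition of $S_n(321, 132)$, compute each crossing count via the identity \eqref{eqfin}, and then deduce the case $\tau = 213$ from the $\rci$-preservation of $\crs$.

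I would first characterize $S_n(321, 132)$. Let $m = \sigma^{-1}(n)$ and write $\sigma = \sigma_1\, n\, \sigma_2$. The avoidance of $132$ forces $\min(\sigma_1) > \max(\sigma_2)$. When $\sigma_2$ is non-empty, avoidance of $321$ across $n$ forces both $\sigma_1$ and $\sigma_2$ to be increasing (a descent in $\sigma_1$ together with any element of $\sigma_2$ would form a $321$-pattern since every entry of $\sigma_1$ exceeds every entry of $\sigma_2$, and likewise for a descent in $\sigma_2$), so $\sigma$ must equal the cyclic shift $\sigma_{n,k} := (k{+}1)(k{+}2)\cdots n\, 1\, 2\cdots k$ with $k = |\sigma_2| \in \{1, \ldots, n-1\}$. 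When $\sigma_2$ is empty, $\sigma = \sigma'' \oplus 1$ for a unique $\sigma'' \in S_{n-1}(321, 132)$. Since $\crs(\sigma'' \oplus 1) = \crs(\sigma'')$, this produces the recurrence
\[
F_n(321, 132; q) = F_{n-1}(321, 132; q) + \sum_{k=1}^{n-1} q^{\crs(\sigma_{n, k})}.
\]

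Next I would compute $\crs(\sigma_{n,k})$ using \eqref{eqfin}. Each position of the first block inverts with each position of the second block and nowhere else, so $\inv(\sigma_{n,k}) = k(n-k)$; every entry of the first block is an excedance and no entry of the second block is, so $\exc(\sigma_{n,k}) = n-k$; an upper (resp. lower) nesting would require $\sigma(j) < \sigma(i)$ among two indices $i < j$, which is impossible since both blocks are increasing, so $\nes(\sigma_{n,k}) = 0$. Hence $\crs(\sigma_{n,k}) = (k-1)(n-k)$, and iterating the recurrence gives
\[
F_n(321, 132; q) \;=\; 1 + \sum_{m=2}^{n} \sum_{k=1}^{m-1} q^{(k-1)(m-k)}.
\]

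The final combinatorial step identifies this double sum with $\sum_{k=1}^{n-1}[n-k]_{q^k}$. Expanding $[n-k]_{q^k} = \sum_{j=0}^{n-k-1} q^{kj}$ and setting $(a,b) = (j, k)$ rewrites the target as $\sum_{a \geq 0,\, b \geq 1,\, a + b \leq n-1} q^{ab}$; the substitution $(a, b) = (k-1, m-k)$ in the left-hand side yields the same unordered sum. For the case $\tau = 213$: a direct check on length-three patterns shows $\rci(\{321, 132\}) = \{321, 213\}$, so Rakotomamonjy's result that $\rci$ preserves $\crs$ \cite[Lem.~4.2]{Rakot} transfers the formula verbatim. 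The main obstacle is the crossings calculation for $\sigma_{n,k}$: either via the $\inv - \exc - 2\nes$ bookkeeping above, or alternatively by partitioning the crossings into upper ones among first-block arcs and lower ones among second-block arcs and counting each type directly; the combinatorial identity at the end is elementary.
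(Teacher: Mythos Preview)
Your proof is correct and follows essentially the same route as the paper's. The only cosmetic difference is that you treat $\tau=132$ first, decomposing by the position of $n$ (either $\sigma(n)=n$, giving $\sigma=\sigma''\oplus 1$, or $\sigma$ is a cyclic shift), whereas the paper treats $\tau=213$ first, decomposing by the position of $1$ (either $\sigma(1)=1$, giving $\sigma=1\oplus\sigma''$, or $\sigma$ is a cyclic shift); the resulting non-trivial permutations are the same cyclic shifts $\alpha_j=(n-j+2)\cdots n\,1\cdots(n+1-j)$, the crossing count $(j-1)(n-j)$ is computed identically via \eqref{eqfin} with $\nes=0$, and both arguments produce the same recurrence and double-sum manipulation before invoking $\rci$ for the other pattern.
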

\begin{proof} 
	It is easy to see that we have 
	$S_n(321,213)=S_n^1(321,213)\cup\{\alpha_2,\alpha_3,\ldots,\alpha_n\}$ where $\alpha_j=(n-j+2)\cdots (n-1)n12\cdots (n+1-j)$  for all $j\in [n]$.  From this structure, we get	
	\begin{equation*}
	F_n(321,213;q) =F_{n}^1(321,213;q)+\sum_{j=2}^{n}q^{\crs(\alpha_j)}.
	\end{equation*}
	Since every 321-avoiding permutations are nonesting \cite[Lem.~5.1]{Rakot}, we have 
	\begin{equation*}
	\crs(\alpha_j)=\inv(\alpha_j)-\exc(\alpha_j)=(j-1)(n-j) \text{ for all $j$}.
	\end{equation*} 
	Using the fact that $F_{n}^1(321,213;q)=F_{n-1}(321,213;q)$,  we obtain	
	\begin{equation*}
	F_n(321,213;q) = F_{n-1}(321,213;q)+\sum_{j=2}^{n}q^{(j-1)(n-j)}.
	\end{equation*}
	When we solve this recurrence with the initial condition $F_1(321,213;q)=1$, we obtain \begin{equation*}
	F_n(321,213;q)=1+\sum_{k=1}^{n-1}\sum_{j=1}^{k}q^{j(k-j)}=1+\sum_{k=1}^{n-1}[n-k]_{q^k}.
	\end{equation*}
	From the fact that  $F_n(321,213;q)=F_n(321,132;q)$ since $\{321,132\}=\rci(\{321,213\})$, we complete the proof of the theorem.
\end{proof}

For the second additional result, we notice first that we have $S_n^k=\{\sigma^{(k,1)}|\sigma \in S_{n-1} \}$. We will show that the following well-defined and bijective map preserves the number of crossings:
\begin{align*}
\Psi_{n,k}:S_{n}^{k}  &\longrightarrow S_{n}^{n+1-k}\\
\sigma^{(k,1)}&\longmapsto   {\rm rc}(\sigma)^{(n+1-k,1)}. 
\end{align*}
\begin{theorem}\label{thm21}
	The bijection $\Psi_{n,k}$ preserves the number of crossings for $1\leq k \leq n$.
\end{theorem}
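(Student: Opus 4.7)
\noindent\textbf{Proof plan for Theorem \ref{thm21}.}
The plan is to compare the crossing counts of $\sigma^{(k,1)}$ and $\operatorname{rc}(\sigma)^{(n+1-k,1)}$ by applying Lemma \ref{lem21} on each side and showing that the resulting expressions are equal. Writing $\tau = \operatorname{rc}(\sigma) \in S_{n-1}$, Lemma \ref{lem21} gives
\begin{equation*}
\crs(\sigma^{(k,1)}) = \crs(\sigma) + \ut_k^-(\sigma) - \lt_k^-(\sigma) + \alpha_k(\sigma)
\end{equation*}
and
\begin{equation*}
\crs(\tau^{(n+1-k,1)}) = \crs(\tau) + \ut_{n+1-k}^-(\tau) - \lt_{n+1-k}^-(\tau) + \alpha_{n+1-k}(\tau).
\end{equation*}
By Lemma \ref{lem22} the leading term on the right becomes $\crs(\sigma) + \ut(\sigma) - \lt(\sigma)$, so the goal is to verify the identity
\begin{equation*}
\ut_{n+1-k}^-(\tau) - \lt_{n+1-k}^-(\tau) + \alpha_{n+1-k}(\tau) = \ut_k^-(\sigma) - \lt_k^-(\sigma) + \alpha_k(\sigma) - \ut(\sigma) + \lt(\sigma).
\end{equation*}

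The heart of the argument is the transformation rule for transients under $\operatorname{rc}$. From $\tau(i) = n-\sigma(n-i)$ and $\tau^{-1}(i) = n-\sigma^{-1}(n-i)$, I would check directly that position $j$ is an upper transient of $\tau$ exactly when position $n-j$ is a lower transient of $\sigma$, and symmetrically with upper and lower swapped. Summing over $j < n+1-k$ (equivalently $n-j \geq k$) then yields
\begin{equation*}
\ut_{n+1-k}^-(\tau) = \lt(\sigma) - \lt_k^-(\sigma) = \lt_k^+(\sigma), \qquad \lt_{n+1-k}^-(\tau) = \ut(\sigma) - \ut_k^-(\sigma) = \ut_k^+(\sigma),
\end{equation*}
which is exactly what is needed to make the transient contributions cancel.

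It remains to show $\alpha_{n+1-k}(\tau) = \alpha_k(\sigma)$. Using $\tau(i) < n+1-k \Leftrightarrow \sigma(n-i) \geq k$ and the substitution $j = n-i$, one gets $\alpha_{n+1-k}(\tau) = |\{j < k : \sigma(j) \geq k\}|$. A standard pigeonhole on the four blocks of the permutation matrix of $\sigma$ cut by the horizontal line at $k$ and the vertical line at $k$ shows that the number of positions $j<k$ with $\sigma(j)\geq k$ equals the number of positions $i\geq k$ with $\sigma(i) < k$, which is $\alpha_k(\sigma)$. Combining these three identities with Lemma \ref{lem22} gives $\crs(\tau^{(n+1-k,1)}) = \crs(\sigma^{(k,1)})$, proving that $\Psi_{n,k}$ is $\crs$-preserving.

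The routine part is the algebraic bookkeeping; the main obstacle I anticipate is stating the transient transformation rule cleanly (since it involves simultaneously reflecting both positions and values), so I would spend care on the one-line verification that $j$ is upper-transient in $\tau$ iff $n-j$ is lower-transient in $\sigma$ before plugging everything into Lemma \ref{lem21}.
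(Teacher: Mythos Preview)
Your proposal is correct and follows essentially the same route as the paper's proof: apply Lemma~\ref{lem21} to both $\sigma^{(k,1)}$ and $\operatorname{rc}(\sigma)^{(n+1-k,1)}$, use Lemma~\ref{lem22} for the $\crs(\operatorname{rc}(\sigma))$ term, then match the remaining pieces via the identities $\ut_{n+1-k}^-(\operatorname{rc}(\sigma))=\lt_k^+(\sigma)$, $\lt_{n+1-k}^-(\operatorname{rc}(\sigma))=\ut_k^+(\sigma)$, and $\alpha_{n+1-k}(\operatorname{rc}(\sigma))=\alpha_k(\sigma)$. The paper's writeup is slightly terser but the logical content is identical, including the pigeonhole step $|\{j<k:\sigma(j)\geq k\}|=|\{i\geq k:\sigma(i)<k\}|$.
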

\begin{proof}
	Let $\sigma^{(k,1)}\in S_n^{k}$ and $\pi^{(n+1-k,1)}=\Psi_{n,k}(\sigma^{(k,1)})$ for $\sigma \in S_{n-1}$. Knowing that {\rm rc} exchanges lower and upper arcs, it is not difficult to see that we have 
	\begin{equation} \label{eq26}
	\ut_{n+1-k}^-(\pi)=\lt_{k}^+(\sigma)  \text{ and  }   \lt_{n+1-k}^-(\pi)=\ut_{k}^+(\sigma).
	\end{equation} 
	Moreover, since  $|\{i<k/\sigma(i)\geq k\}|=|\{i\geq k/\sigma(i)<k\}|$, we get 
	\begin{equation}\label{eq27}
	\alpha_{n+1-k}(\pi)=\alpha_{k}(\sigma).
	\end{equation}
	Indeed, we have
	\begin{align*}
	\alpha_{n+1-k}(\pi)&=|\{n-i\geq n+1-k/\pi(n-i)<n+1-k\}|,\\
	&=|\{i\leq k-1/n-\sigma(i)<n+1-k\}|,\\
	&=|\{i<k/\sigma(i)>k-1\}|,\\
	&=|\{i<k/\sigma(i)\geq k\}|,\\
	&=\alpha_{k}(\sigma).
	\end{align*}
	Consequently, combining \eqref{eq26} and \eqref{eq27} with Lemma \ref{lem21} and Lemma \ref{lem22}, we get
	\begin{align*}
	\crs(\pi^{(n+1-k,1)})&=\crs(\pi)+\ut_{n+1-k}^-(\pi)-\lt_{n+1-k}^-(\pi)+\alpha_{n+1-k}(\pi),\\		
	&=\crs(\sigma)+\ut(\sigma)-\lt(\sigma)+\lt_{k}^+(\sigma)-\ut_{k}^+(\sigma)+\alpha_{k}(\sigma), \\
	&=\crs(\sigma)+\left( \ut(\sigma)-\ut_{k}^+(\sigma)\right) -\left( \lt(\sigma)-\lt_{k}^+(\sigma)\right) +\alpha_{k}(\sigma),\\
	&=\crs(\sigma)+\ut_{k}^-(\sigma)-\lt_{k}^-(\sigma)+\alpha_{k}(\sigma),\\
	&=\crs(\sigma^{(k,1)}).
	\end{align*}
	This proves the $\crs$-preserving of the bijection $\Psi_{n,k}$ and also ends the proof of Theorem \ref{thm21}.
\end{proof} 
\begin{corollary} \label{cor21} For any integers $n$ and $k \in [n]$, we have the following equidistributions:
	\begin{equation*}
	\sum_{\sigma\in  S_{n,k}} q^{\crs(\sigma)}=\sum_{\sigma\in  S_{n}^{n+1-k}} q^{\crs(\sigma)}=\sum_{\sigma\in  S_{n}^{k}} q^{\crs(\sigma)}=\sum_{\sigma\in  S_{n,n+1-k}} q^{\crs(\sigma)}.
	\end{equation*}
\end{corollary}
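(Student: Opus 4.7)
The plan is to chain together two crossing-preserving bijections: the map $\Psi_{n,k}$ just established in Theorem \ref{thm21}, and the involution $\rci$ whose crossing-preserving property was recalled earlier (from \cite[Lem.~4.2]{Rakot}). So the whole corollary reduces to bookkeeping on which set goes to which.

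First I would use Theorem \ref{thm21} directly: since $\Psi_{n,k}:S_n^k\to S_n^{n+1-k}$ is a $\crs$-preserving bijection, the two middle equalities
$$\sum_{\sigma\in S_n^{n+1-k}}q^{\crs(\sigma)}=\sum_{\sigma\in S_n^k}q^{\crs(\sigma)}$$
is immediate. This is the easy half.

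Next, I would verify that $\rci$ sends $S_{n,k}$ bijectively onto $S_n^{n+1-k}$. For $\sigma\in S_n$ with $\sigma(n)=k$, one computes step by step: $\inv(\sigma)(k)=\sigma^{-1}(k)=n$, then $\mathrm{c}\circ\mathrm{i}(\sigma)(k)=n+1-n=1$, and finally
$$\rci(\sigma)(i)=\mathrm{c}\circ\mathrm{i}(\sigma)(n+1-i)=n+1-\sigma^{-1}(n+1-i),$$
which equals $1$ precisely when $n+1-i=k$, i.e.\ when $i=n+1-k$. Thus $\rci(\sigma)\in S_n^{n+1-k}$, and since $\rci$ is an involution on $S_n$ it restricts to a bijection $S_{n,k}\to S_n^{n+1-k}$. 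Because $\rci$ preserves $\crs$, this yields the first equality of the corollary. Applying the same argument with $k$ replaced by $n+1-k$ gives a $\crs$-preserving bijection $S_{n,n+1-k}\to S_n^k$, establishing the fourth equality.

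There is no real obstacle here; the only thing to be careful about is the verification that $\rci$ maps $S_{n,k}$ into $S_n^{n+1-k}$ (rather than, say, $S_n^k$ or $S_{n,n+1-k}$), which is a short direct computation as sketched above. Once that is in place the four sums collapse into a single chain of $\crs$-preserving bijections
$$S_{n,k}\;\xrightarrow{\rci}\;S_n^{n+1-k}\;\xrightarrow{\Psi_{n,k}^{-1}}\;S_n^{k}\;\xrightarrow{\rci}\;S_{n,n+1-k},$$
and the corollary follows.
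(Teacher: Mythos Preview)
Your proof is correct and follows essentially the same approach as the paper: both use the $\crs$-preserving bijection $\Psi_{n,k}$ from Theorem~\ref{thm21} for the middle equality and the $\crs$-preserving involution $\rci$ to link the sets $S_{n,k}$, $S_{n,n+1-k}$ with $S_n^{n+1-k}$, $S_n^k$. One minor slip: you wrote $\inv(\sigma)$ where you meant the inverse map $\mathrm{i}(\sigma)$ (in the paper $\inv$ denotes the number of inversions), but the computation itself is fine.
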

\begin{proof}
	We have  $S_{n}^{n+1-k}=\Psi_{n,k}(S_n^k)$ and $S_{n,n+1-k}={\rm rci}(S_n^k)$ for any $k \in [n]$. So we get these identities from the facts that the bijections $\Psi_{n,k}$ and $\rci$ are \crs-preserving.
\end{proof}
\begin{corollary}
	The number of permutations of $[2n]$ having $r$ crossings is always even for all integers $n\geq 1$ and $r\geq 0$.
\end{corollary}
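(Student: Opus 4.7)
The plan is to partition $S_{2n}$ according to the position of the entry $1$ and then to exploit the $\crs$-preserving bijection of Corollary~\ref{cor21} to pair up the resulting blocks. Starting from the disjoint decomposition $S_{2n}=\bigsqcup_{k=1}^{2n}S_{2n}^{k}$, I would first write
\begin{equation*}
\sum_{\sigma\in S_{2n}}q^{\crs(\sigma)}=\sum_{k=1}^{2n}\sum_{\sigma\in S_{2n}^{k}}q^{\crs(\sigma)}.
\end{equation*}

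The next step is to apply Corollary~\ref{cor21} (with $n$ there replaced by $2n$), which guarantees that $S_{2n}^{k}$ and $S_{2n}^{2n+1-k}$ carry the same crossing polynomial for every $k\in[2n]$. The crucial observation will be that the involution $k\mapsto 2n+1-k$ on $[2n]$ is \emph{fixed-point free}, since the equation $2k=2n+1$ has no integer solution. Consequently the index set $[2n]$ splits into exactly $n$ two-element orbits $\{k,\,2n+1-k\}$ with $k\in\{1,\ldots,n\}$, and the two indices in each orbit contribute identical polynomials to the sum above.

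Grouping terms according to these orbits then yields
\begin{equation*}
\sum_{\sigma\in S_{2n}}q^{\crs(\sigma)}=2\sum_{k=1}^{n}\sum_{\sigma\in S_{2n}^{k}}q^{\crs(\sigma)},
\end{equation*}
so every coefficient of the left-hand polynomial is twice a nonnegative integer. Extracting the coefficient of $q^{r}$ on both sides gives the desired parity statement.

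I do not expect any real obstacle beyond Corollary~\ref{cor21} itself; the only subtle point worth flagging is the role of the parity of $2n+1$, which is what prevents a ``diagonal'' class from being paired with itself. This is precisely the reason the result is restricted to permutations of $[2n]$: for odd length $2m+1$ the middle class $k=m+1$ would satisfy $2n'+1-k=k$, so its contribution would appear only once in the analogous pairing and the argument would break down.
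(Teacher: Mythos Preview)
Your proposal is correct and follows exactly the paper's own argument: decompose $F_{2n}(q)=\sum_{k=1}^{2n}F_{2n}^{k}(q)$, use Corollary~\ref{cor21} to identify $F_{2n}^{k}(q)=F_{2n}^{2n+1-k}(q)$, and observe that the pairing $k\leftrightarrow 2n+1-k$ on $[2n]$ has no fixed point, yielding $F_{2n}(q)=2\sum_{k=1}^{n}F_{2n}^{k}(q)$. Your additional remark about why the argument fails for odd length is a helpful elaboration but not needed for the statement as given.
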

\begin{proof}
	The number of permutations of $[2n]$ having $r$ crossings is $[q^{r}]F_{2n}(q)$ (i.e., the coefficient of the polynomial $F_{2n}(q)$), where  $F_{2n}(q)=\sum_{k=1}^{2n}F_{2n}^k(q)=2\sum_{k=1}^{n}F_{2n}^k(q)$.
\end{proof}

\section{Acknowledgment}
We  highly appreciate the comments and suggestions of the anonymous referees,
which significantly contributed to
improving the quality of the publication.

\end{document}